\newtheorem{theorem}{Theorem}[section]
\newtheorem{lemma}[theorem]{Lemma}
\newtheorem{proposition}[theorem]{Proposition}
\newtheorem{corollary}[theorem]{Corollary}
\newenvironment{remark}[1][Remark]{\begin{trivlist}
\item[\hskip \labelsep {\bfseries #1}]}{\end{trivlist}}
\renewcommand{\a}{\"a}
\newcommand{\R}{\mathbb{R}}
\newcommand{\Z}{\mathbb{Z}}
\newcommand{\coker}{\mbox{coker}}
\newcommand{\<}{\left\langle}
\renewcommand{\>}{\right\rangle}
\newcommand{\be}{\begin{equation}}
\newcommand{\ee}{\end{equation}}
\newcommand{\bea}{\begin{eqnarray}}
\newcommand{\eea}{\end{eqnarray}}
\newcommand{\ba}{\begin{array}}
\newcommand{\ea}{\end{array}}
\newcommand{\ran}{\mbox{ran}}
\begin{document}

\title{Asymptotic Stability of the Toda $m$-soliton}
\author{G.N. Benes, A. Hoffman and C.E. Wayne}
\maketitle
\abstract{We prove that multi-soliton solutions of the Toda lattice are both linearly and nonlinearly
stable.  Our proof uses neither the inverse spectral method nor the Lax pair of the model but instead
studies the linearization of the B\"acklund} transformation which links the ($m-1$)-soliton solution to the 
$m$-soliton solution.  We use this to construct a conjugation between the Toda flow linearized about
an $m$-solition solution and the Toda flow linearized about the zero solution, whose stability properties
can be determined by explicit calculation.

\section{Introduction}

Solitary waves or solitons are a common feature of infinite dimensional dispersive dynamical systems in which the
effects of dispersion are exactly off-set by the effects of the nonlinearity.   They are also important features
of physical systems ranging from water waves to nonlinear optics.  As such, questions of stability and
interaction of such waves are natural and important.  One class of systems which is amenable to 
study are lattices of coupled, nonlinear oscillators like the Fermi-Pasta-Ulam model.  In the present paper
we study the stability of multi-soliton ($m$-soliton) solutions of a particular lattice system,
the Toda system, which features an exponential interaction potential between adjacent oscillators.

More precisely, this paper establishes the stability of $m$-soliton solutions to the Toda lattice equations
\be \label{eq:Toda} 
\ddot{Q} =e^{-(Q-Q_-)}-e^{-(Q_+ - Q)},
\ee
where $Q=(Q_n)_{n\in\Z}$ is the vector of positions of unit masses in an infinite chain of particles with nearest-neighbor interaction potential $V(x)=e^{-x}-(1-x)$.  Here and throughout the paper $X_\pm$ denotes a left or right shift of $X$, i.e. $(X_\pm)_n=X_{n\pm 1}$.  A solution of $\eqref{eq:Toda}$ of the form $Q_n(t) = \phi(n-ct)$ is called a {\it traveling wave solution}.  In the case that $\phi$ is monotone and the limits $\phi(\pm \infty)$ are defined, we say that $\phi$ is a {\it kink} or {\it front}.  Since $\eqref{eq:Toda}$ is invariant with respect to adding a constant, we can, without loss of generality, restrict attention to the case $\phi(-\infty) = 0$.  A special class
of front solutions  of   $\eqref{eq:Toda}$, the solitons, are given explicitly by $Q^{1}_n(t;\kappa,\gamma) = \log\frac{\cosh(\kappa n - t\sinh \kappa + \gamma)}{\cosh(\kappa(n+1) - t \sinh \kappa + \gamma)} - \kappa$.  Note that they form a two-dimensional manifold parameterized by $\kappa$ and $\gamma$, with
asymptotic limit $\lim_{n \to \infty} Q^{1}_n(t;\kappa,\gamma) = -2 \kappa$.

It is well known that the Toda lattice is an integrable system and hence admits multi-soliton solutions \cite{toda:1975,flaschka:1974}.  To be concrete, in our context an {\it $m$-soliton solution} is a solution $Q^{m}$ for which there are $3m$ constants $\gamma_1^\pm,\cdots ,\gamma_m^\pm$ and $\kappa_1,\cdots,\kappa_m$ such that \[ \lim_{t \to \pm \infty} \left| Q^{m}_n(t) - \sum_{j=1}^m Q^{1}_n(t;\kappa_j,\gamma_j^\pm) \right| = 0.\]
In practice $(\gamma_1^+,\cdots , \gamma_m^+)$ are determined by $(\gamma_1^-,\cdots , \gamma_m^-)$ and $(\kappa_1,\cdots, \kappa_m)$ so that the space of $m$-soliton solutions is $2m$-dimensional.

The main result of this paper is that these $m$-soliton solutions are asymptotically stable in a sense that we will make precise in section 2.  We do not depend on the existence of a
Lax pair or inverse scattering method, but we do use in an essential way the existence of 
a B\a cklund transformation which relates $m$- and $(m-1)-$ solitons \cite{toda:1975} :
\be \label{eq:BT} \ba{l}
P + e^{-(Q' -Q - \kappa_m)} + e^{-(Q - Q'_- +\kappa_m)} = 2\cosh\kappa_m \\
P' + e^{-(Q' -Q - \kappa_m)} + e^{-(Q_+ -Q' + \kappa_m )} = 2\cosh\kappa_m. \\
\ea \ee
Here $(Q,P)$ is an $m$-soliton solution, $(Q',P')$ is an $(m-1)$ soliton solution, and the speed of the additional soliton in $(Q,P)$ is given by $\frac{\sinh\kappa_m}{\kappa_m}$ \cite{toda:1975b}. Given $(Q',P')$, equations (\ref{eq:BT}) together with the relations $\dot{Q} = P$ and $\dot{Q}' = P'$ determine $(Q,P)$ up to a constant of integration corresponding to the phase of the new soliton $\gamma_m$.  The corresponding $m$-soliton $Q$ has the asymptotic limits $Q_{-\infty} = 0$ and $Q_\infty = -2\sum_{j=1}^m \kappa_j$.

This paper contains two main results.  Theorem $\ref{thm:main}$ establishes linear stability for the $m$-soliton solution.  Theorem $\ref{thm:stab}$ establishes nonlinear stability.  The proof that linear stability implies nonlinear stability is based on the now classical theory of modulation equations as developed in \cite{pego:1994,friesecke:2002}.  

The proof of linear stability is based on a more recent idea of using the {\em linearization}
of the B\a cklund transformation to construct a conjugacy between the linearization of the 
flow of the Toda equations around an $m$-soliton and the linearization
around an $m-1$-soliton.   The existence of such a conjugacy between the linearization about 
a single soliton and the linearization about the zero solution was implicitly exploited in
 \cite{mann:1997} in the context of the KdV equation and developed in 
 detail for the Toda lattice in  \cite{mizumachi:2008}.  More recently the 
 general case of the linearization around an $m$-soliton of the KdV equation has been considered
 in \cite{mizumachi:2010}.

\subsection{Hamiltonian Formulation}
It is convenient to define $R=Q_+-Q=(S-I)Q$.   Here $S$ is the shift operator and
throughout the paper we will use the subscript ``$+$'' to denote the action
of this operator - i.e. $(Sx) = x_+$.
We also define $U=(R,P)$.  Then (\ref{eq:Toda}) can be rewritten in the Hamiltonian form
\be \label{eq:Hamiltonian} 
\dot{U}=JH'(U).
\ee
Here the symplectic operator $J$ is given by $J=\left(\begin{array}{cc} 0 & S-I \\ I-S^{-1} & 0 \\ \end{array}\right)$.  The Hamiltonian is defined by
$H=\sum_{n\in\Z} \left(\frac{1}{2}P_n^2 + V(R_n)\right)$ where the interaction potential is given by
$V(R)=e^{-R}-1+R$.  Define the weighted norm $\|x\|_a=\|x\|_{\ell^2_a}^2 = \sum_{n \in \Z} e^{2an}x_n^2$ and let $\ell^2_a$ denote the associated weighted Hilbert Space.  Note that we will use $\|x\|$ to denote the standard $\ell^2$ norm.

Let $\Phi(t,s)$ be the evolution operator associated with the linearized Toda flow $\dot{u}=JH''(U^m)u$,
where $U^m$ is the $m$-soliton solution written in terms of the $(R,P)$ variables.  
Define the following subspace of $\ell^2_a \times \ell^2_a$:
\[ \label{eq:XN}
X_m(s):=\{u\in\ell^2_a\times\ell^2_a | \< u,J^{-1}\partial_{\kappa_i}U^m(s) \> = \< u,J^{-1}\partial_{\gamma_i}U^m(s) \> = 0,\ i=1,\ldots,m \}. \]
Here $\langle \cdot, \cdot \rangle$ is the standard $\ell^2$ inner product.  The condition 
$\< x,J^{-1}y\> = 0$ can be thought of as a symplectic orthogonality condition, as discussed in \cite{friesecke:2002}. 

\subsection{Main Results}

We begin with our results on decay of solutions of the linearized Toda equations.
These results depend on the fact that dispersive waves in the Toda equation
(i.e. solutions of the equations linearized about the zero
solution) all propagate with speed less than or equal to one,
while all solitary waves move with speeds strictly greater than
one.  In particular, for the linearization of the equations
about the zero solution  an easy and explicit computation shows
that solutions decay exponentially in time in any of the weighted
Hilbert spaces $\ell_a^2$ if the weight translates to either
the left or right with a speed greater than one (a detailed proof
of this fact is provided in Lemma 3 of \cite{mizumachi:2008}).
Intuitively, one can imagine that the region in which the 
norm ``sees'' the solution just ``outruns'' the dispersive
perturbations.
If we now use the fact that the linearized B\"acklund transformation
conjugates the semigroup of the linearization about the zero
solution to the linearization about the multi-soliton
(provided the orthogonality conditions in \eqref{eq:XN}
are satisfied) we see that we also expect exponential decay
of solutions of the equations linearized about the multi-solitons,
provided the weight in the norm moves faster than the 
dispersive part of the solution (i.e., faster than one)
but slower than the solitons themselves.

\begin{theorem} \label{thm:main}
Let $\gamma_i \in \R$ and $\kappa_i > 0$ be given for $i = 1,\cdots , m$.   Define $\kappa_{min} =\min\{\kappa_i|i=1,\ldots,m\}$.  Let $c>1$, let $a\in(0,2\kappa_{min})$, and let $\beta:=ca-2\sinh(a/2)$.
Let $\Phi(t,x)$ be the evolution operator for the linearization of the Toda equations about an $m$-soliton
solution with parameters $\kappa_i$ and $\gamma_i$, $i= 1, \dots , m$.

Then there exists a constant $K>0$ such that for any $u_0\in X_m(s)$ and for all $t\geq s$,
\[ ||e^{a(n-ct-T)} \Phi(t,s) u_0|| \leq Ke^{-\beta(t-s)}||e^{a(n-cs-T)}u_0||. \]
\end{theorem}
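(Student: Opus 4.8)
The plan is to prove the estimate by induction on the number of solitons $m$, using the \emph{linearized} B\"acklund transformation to conjugate the flow $\Phi^m:=\Phi$ linearized about the $m$-soliton to the flow $\Phi^{m-1}$ linearized about the $(m-1)$-soliton. The base case $m=0$ is the linearization about the zero solution, where $X_0(s)=\ell^2_a\times\ell^2_a$ and the estimate can be read off from an explicit Fourier computation; this is the quantitative source of the rate $\beta=ca-2\sinh(a/2)$. Each inductive step then transports the estimate up by one soliton, preserving the exponent $\beta$ and worsening the constant $K$ only by a bounded factor. Throughout I write $w_t$ for multiplication by $e^{a(n-ct-T)}$ and use the pointwise identity $w_t=e^{-ac(t-s)}w_s$.

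For $m=0$ we have $U^0\equiv 0$, so $H''(U^0)=I$ and $\Phi^0(t,s)=e^{(t-s)J}$ is the autonomous semigroup generated by $J$. I would factor
\[ \|w_t\,\Phi^0(t,s)u_0\|=e^{-ac(t-s)}\,\|w_s\,e^{(t-s)J}w_s^{-1}(w_s u_0)\|. \]
Conjugation by $w_s$ (a constant multiple of $e^{an}$) sends $S\mapsto e^{-a}S$ and $S^{-1}\mapsto e^{a}S^{-1}$, so $w_s e^{(t-s)J}w_s^{-1}$ is a Fourier multiplier with symbol $e^{(t-s)M(\theta)}$, where
\[ M(\theta)=\begin{pmatrix} 0 & e^{-a+i\theta}-1 \\ 1-e^{a-i\theta} & 0 \end{pmatrix}. \]
Its eigenvalues satisfy $\lambda^2=(e^{-a+i\theta}-1)(1-e^{a-i\theta})$, and a short computation gives $\max_{\pm}\mathrm{Re}\,\lambda_\pm(\theta)=\sqrt{(\cosh a-1)(1+\cos\theta)}$, whose supremum over $\theta$ is $\sqrt{2(\cosh a-1)}=2\sinh(a/2)$, attained at $\theta=0$. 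Since $\lambda^2$ never vanishes for $a>0$, the eigenvalues are simple for every $\theta$ and the spectral projections are uniformly bounded, so the multiplier obeys $\|e^{(t-s)M}\|\le Ce^{2\sinh(a/2)(t-s)}$. Combining with the prefactor $e^{-ac(t-s)}$ yields exactly the claimed rate $\beta=ca-2\sinh(a/2)$ (this is the explicit computation carried out in Lemma~3 of \cite{mizumachi:2008}).

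For the inductive step, assume the estimate for the $(m-1)$-soliton on $X_{m-1}(s)$ with constant $K_{m-1}$. Linearizing the B\"acklund system (\ref{eq:BT}) together with $\dot Q=P$, $\dot Q'=P'$ about the pair $(U^m,U^{m-1})$ produces a time-dependent operator $B(t)$ carrying solutions of $\dot u'=JH''(U^{m-1})u'$ to solutions of $\dot u=JH''(U^m)u$; equivalently it intertwines the evolutions, $B(t)\Phi^{m-1}(t,s)=\Phi^m(t,s)B(s)$. I would then establish: (i) $B(t)$ is bounded on $\ell^2_a\times\ell^2_a$ uniformly in $t$, since its coefficients are built from the uniformly bounded soliton profiles and their shifts; and (ii) $B(t)$ restricts to a boundedly invertible isomorphism $X_{m-1}(t)\to X_m(t)$, with inverse bounded uniformly in $t$. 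Property (ii) is where $a\in(0,2\kappa_{\min})$ enters: the coefficients of $B$ and of the integrated relation defining $B^{-1}$ are governed by soliton tails decaying exponentially with rates $2\kappa_j$, so the weight $e^{an}$ remains subordinate precisely when $a<2\kappa_{\min}$. Because $\|w_t\cdot\|$ is a fixed multiple of the $\ell^2_a$-norm, these bounds transfer to the moving weighted norm. Granting (i)–(ii), for $u_0\in X_m(s)$ one writes $\Phi^m(t,s)u_0=B(t)\Phi^{m-1}(t,s)B(s)^{-1}u_0$ and estimates $\|w_t\Phi^m(t,s)u_0\|\le \sup_t\|B(t)\|\cdot K_{m-1}e^{-\beta(t-s)}\cdot\sup_s\|B(s)^{-1}\|\cdot\|w_s u_0\|$, which is the desired bound with the same exponent $\beta$ and $K_m=\sup_t\|B(t)\|\cdot\sup_s\|B(s)^{-1}\|\cdot K_{m-1}$.

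I expect the crux to be the inductive step rather than the base case, and within it two points require genuine work. First, the linearized B\"acklund transformation is not literally invertible: recovering the $m$-soliton perturbation from the $(m-1)$-soliton perturbation requires integrating the differentiated constraints, which introduces exactly the ambiguity in the two new directions $\partial_{\kappa_m}U^m$ and $\partial_{\gamma_m}U^m$. Pinning these down is what forces the passage to the symplectically orthogonal subspaces $X_{m-1},X_m$, and this choice must be checked to be compatible with the intertwining relation and to account for the codimension jump from $2(m-1)$ to $2m$. Second, one must verify that this inversion is bounded in the weighted norm \emph{uniformly} in time, despite the explicit time dependence of $B(t)$ through the moving soliton profiles; it is this uniformity that keeps $K_m$ finite and propagates the rate $\beta$ unchanged through all $m$ stages of the induction.
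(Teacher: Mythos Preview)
Your proposal is correct and follows essentially the same route as the paper: induction on $m$ with the $m=0$ base case handled by the explicit Fourier/multiplier computation (your formula $(\mathrm{Re}\,\lambda)^2=(\cosh a-1)(1+\cos\theta)$ is exactly right), and the inductive step carried out via the linearized B\"acklund transformation $B(t)$, shown to intertwine the flows, to restrict to an isomorphism $X_{m-1}(t)\to X_m(t)$, and to have uniformly-in-$t$ bounded inverse (the paper's Propositions~\ref{pr:LBTcommutes}, \ref{prop:orthoinvariance}, Theorem~\ref{thm:isomorphism}/Corollary~\ref{cor:Xm_isomorphism}, and Lemma~\ref{lem:Bbound}). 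You have also correctly identified the two genuine subtleties---that invertibility of $B$ requires passing to the symplectic complements (the kernel of $\hat C$ is spanned by $\partial_{\gamma_m}Q$, and the cokernel condition on $C$ is exactly the $\partial_{\gamma_m}$-orthogonality, with the $\partial_{\kappa_m}$-orthogonality used to kill the remaining free constant), and that the restriction $a\in(0,2\kappa_{\min})$ is what makes the first-order recurrences behind $C,\hat C$ asymptotically hyperbolic in the weighted space, yielding the uniform-in-$t$ bounds.
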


\begin{remark}
We note the perturbation only decays for $\beta > 0$, which corresponds to choices of $a$ and $c$ such that $c>\frac{\sinh{a/2}}{a/2}$.
\end{remark}

\begin{theorem} \label{thm:stab} Let $c > 1$, $\kappa$ and $\beta$ be as in Theorem $\ref{thm:main}$.  For each $a \in (0,2\kappa)$ and $\beta' \in (0,\beta)$, there is a $\delta > 0$  with
the property that if $u_0$ satisfies
\[ \| e^{a(\cdot - T)}u_0 \| + \|u_0\| < \delta,\]
for some $T \in \R$, then associated to the  solution $u(t)$ of 
Toda equations with initial data $u_0+U^m(t_0;\kappa_1,\dots,\gamma_m)$,
there exist real valued functions of a real variable $\gamma_i(t)$ and $\kappa_i(t)$ for $i = 1,\cdots , m$ such that 
\[\| e^{a(\cdot - ct-T)}[u(t) - U^m(t;\kappa(t),\gamma(t))]\| \le Ke^{-\beta' (t-t_0)}\]
and 
\[ \| u(t) - U^m(t;\kappa(t),\gamma(t))\|^2 + \sum_{i=1}^m |\kappa_i(t) - \kappa_i(t_0)| + |\gamma_i(t) - \gamma_i(t_0)|\le K\|u(t_0)-U^m(t_0;\kappa(t_0),\gamma(t_0))\|^2.\]
Moreover, the limits $\lim_{t \to \infty} \gamma_i(t) = \gamma_i(\infty) $ and $\lim_{t \to \infty} \kappa_i(t) = \kappa_i(\infty)$ exist.
Furthermore, the same result holds in backward time upon replacing $a$ with $-a$.
\end{theorem}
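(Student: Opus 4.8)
The plan is to run the modulation-equation argument of \cite{pego:1994}, in the lattice formulation of \cite{friesecke:2002}, with Theorem \ref{thm:main} supplying the linear decay. I would first write the solution in modulated form
\[
u(t) = U^m\big(t;\kappa(t),\gamma(t)\big) + v(t),
\]
and fix the $2m$ time-dependent parameters $(\kappa(t),\gamma(t))$ by demanding that $v(t)$ satisfy the symplectic orthogonality conditions defining $X_m(t)$, namely $\langle v(t),J^{-1}\partial_{\kappa_i}U^m(t)\rangle = \langle v(t),J^{-1}\partial_{\gamma_i}U^m(t)\rangle = 0$ for $i=1,\dots,m$. Solvability of this system for $v$ small is an implicit function theorem argument: at $v=0$ the parameters equal their initial values, and the Jacobian is the symplectic Gram matrix $\big(\langle\partial_{p}U^m,J^{-1}\partial_{q}U^m\rangle\big)_{p,q}$ with $p,q$ ranging over $\{\kappa_i,\gamma_i\}$, which is invertible because the tangent vectors to the $m$-soliton manifold form a symplectically nondegenerate set.

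Since each frozen-parameter $m$-soliton solves \eqref{eq:Hamiltonian} exactly, substituting the ansatz and using $\partial_t U^m = JH'(U^m)$ yields the perturbation equation
\[
\dot v = JH''(U^m)v + J\mathcal N(v) - \sum_{i=1}^m \dot\kappa_i\,\partial_{\kappa_i}U^m - \sum_{i=1}^m \dot\gamma_i\,\partial_{\gamma_i}U^m,
\]
where $\mathcal N(v) = H'(U^m+v) - H'(U^m) - H''(U^m)v$ is the (at least) quadratic remainder coming from the exponential potential. Differentiating the orthogonality constraints in time and substituting this equation produces a linear system for $(\dot\kappa,\dot\gamma)$. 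The terms linear in $v$ cancel: each $\partial_{\kappa_i}U^m,\partial_{\gamma_i}U^m$ solves the linearized equation $\partial_t w = JH''(U^m)w$ (they are obtained by differentiating the exact solution in its parameters), and skew-adjointness of $J$ turns the two surviving linear contributions into $\mp\langle v,H''(U^m)\partial_{\kappa_i}U^m\rangle$, which annihilate. What remains is $(\dot\kappa,\dot\gamma) = -M^{-1}b(v)$ with $b(v)$ built from the pairings $\langle \mathcal N(v),\partial_{\kappa_i}U^m\rangle$, hence quadratic in $v$. Because the soliton derivatives decay exponentially and $a<2\kappa_{min}$, these pairings are bounded in the weighted norm, giving $|\dot\kappa_i| + |\dot\gamma_i| \lesssim \|e^{a(\cdot-ct-T)}v(t)\|\,\|v(t)\|_{\ell^\infty}$.

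To close I would bootstrap the pair $A(t) := \|e^{a(\cdot-ct-T)}v(t)\|$ and $B(t) := \|v(t)\|$. Writing the $v$-equation in Duhamel form against $\Phi(t,s)$, a short computation shows that the choice of $(\kappa(t),\gamma(t))$ forces the Duhamel source at each $s$ to lie in $X_m(s)$, so Theorem \ref{thm:main} applies to it and supplies the factor $e^{-\beta(t-s)}$ for $A$. The nonlinear forcing is controlled in the weighted norm by $\|e^{a(\cdot-ct-T)}\mathcal N(v)\| \lesssim A(t)\,\|v(t)\|_{\ell^\infty} \lesssim A(t)B(t)$, using a Taylor estimate on the exponential potential (valid while $\|v\|_{\ell^\infty}$ is small) and $\ell^2\hookrightarrow\ell^\infty$; the modulation corrections are likewise $O(AB)$. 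A Gronwall argument, with $\delta$ small, then yields $A(t)\le Ke^{-\beta'(t-t_0)}A(t_0)$ for every $\beta'<\beta$, the loss of rate absorbing the quadratic self-interaction. For the unweighted quantity $B$, I would invoke the orbital-stability estimate obtained from the conserved Hamiltonian (and the other Toda invariants): a Lyapunov functional whose Hessian is coercive on $X_m$ modulo the symmetry directions gives $B(t)^2 + \sum_i\big(|\kappa_i(t)-\kappa_i(t_0)| + |\gamma_i(t)-\gamma_i(t_0)|\big) \lesssim B(t_0)^2$, which is the second displayed estimate. Finally, integrability of $|\dot\kappa_i| + |\dot\gamma_i| \lesssim A(t)B(t)$ in $t$, guaranteed by the exponential decay of $A$, yields the limits $\kappa_i(\infty),\gamma_i(\infty)$.

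The principal difficulty is the coupling of the two norms: Theorem \ref{thm:main} gives decay only in the weighted norm $A$, yet the nonlinearity re-enters through the unweighted $B$ via the $\ell^\infty$ bound, so $A$ and $B$ must be propagated simultaneously and $\delta$ chosen to close both continuity arguments at once; the weighted estimate provides the time decay while the unweighted Lyapunov estimate prevents $B$ from growing. A secondary point is that $\Phi(t,s)$ is the propagator for the soliton with frozen parameters while $X_m(t)$ drifts with $(\kappa(t),\gamma(t))$; this mismatch is reconciled by the modulation terms, which are quadratically small and therefore harmless in the bootstrap. The backward-time statement follows by applying the identical scheme with the reflected weight $a\mapsto -a$, under which the left- and right-moving dispersive parts exchange roles and the corresponding version of Theorem \ref{thm:main} again delivers exponential decay.
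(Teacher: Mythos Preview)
Your outline matches the paper's structure---modulated ansatz, implicit-function-theorem choice of parameters, modulation ODEs, and a bootstrap coupling the weighted norm $A(t)$ to the unweighted norm $B(t)$---so the architecture is right. The genuine gap is in your control of $B(t)$. You invoke ``the orbital-stability estimate obtained from the conserved Hamiltonian (and the other Toda invariants): a Lyapunov functional whose Hessian is coercive on $X_m$,'' i.e.\ a Maddocks--Sachs type result. The paper explicitly remarks that no such result is known for the Toda $m$-soliton, so you are appealing to a theorem that is not available; without it your bootstrap does not close.

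The paper replaces this with a weaker, Hamiltonian-only estimate that nevertheless suffices. Writing $u=U(t;\xi(t))+v(t)$ and using only convexity of $H$ together with conservation, one obtains $\|v(t)\|^2 \le K\big(\|v(t_0)\|^2 + |\xi(t)-\xi(t_0)|\big)$, \emph{not} $\lesssim \|v(t_0)\|^2$ outright. The extra ingredient making this work is the identity $\langle H'(U),v\rangle\equiv 0$, which in turn follows from the profile relation $\partial_t U = -\sum_i 2\sinh\kappa_i\,\partial_{\gamma_i}U$ combined with your orthogonality constraints; this kills the linear-in-$v$ term in the energy expansion. The parameter drift $|\xi(t)-\xi(t_0)|$ is then controlled \emph{through the weighted norm}, since $|\dot\xi|\lesssim \|R\|_a$ (here $R$ also contains the frozen-vs.-modulated mismatch $[H''(U(\xi(t_0)))-H''(U(\xi(t)))]v$, which you should include explicitly). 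So the logic is circular in a benign way: $B$ is bounded by $B_0$ plus the drift, the drift is bounded by the time-integral of $A$, and $A$ decays exponentially by Theorem~\ref{thm:main}. Choosing the bootstrap constants in the right order closes all three estimates simultaneously. If you swap your Lyapunov appeal for this energy argument, the rest of your plan goes through essentially as written.
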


\begin{remark}
Note that we restrict our initial perturbation to lie in a weighted space.  At first blush, this might appear to limit our analysis to initial perturbations which are exponentially localized behind the $m$-soliton solution.  Note, however, that by varying the decay $a$ and the translation $T$ we can cover almost any initial data which is small in $\ell^2$ and decays at some exponential rate at spatial $\infty$.  For example, by choosing $T$ large, we can allow for an initial perturbation which is localized in front of the $m$-soliton solution.  By choosing $a$ small, we can allow this perturbation to have long tails.  

Note that we do disallow perturbations which decay very slowly, such as an infinite series of solitons, as were used in \cite{martel:2005} to construct a solution (to the gKdV equation) whose phase drifted off to $\infty$.  
\end{remark}

Stability results for multi-soliton solutions in integrable systems date back to Maddocks and Sachs \cite{maddocks:1993} where KdV multi-solitons are shown to be constrained minimizers of a Lyapunov function.  
{\color{black} While
a similar approach was shown to work for NLS-type equations \cite{kapitula:2007},
we know of no comparable result for the Toda lattice. }
 An alternate approach, developed (for example) in \cite{martel:2002} decomposes the multi-soliton solution into several weakly interacting pieces, each of which is localized about a soliton and then leverages stability theory for a single solitary wave.  This approach does extend to the Toda lattice, but its use is restricted to initial data corresponding to a train of solitons which will not interact in forward time.  On the contrary, we are able to show stability for $m$-soliton solutions, even when the interaction between two or more constituent solitons lies in the future.  

One drawback of the B\"{a}cklund transform method that we use here, is that unlike the variational/dispersive estimate approach in \cite{martel:2002} it does not extend easily to the non-integrable case.  
{\color{black} However, there
has been some recent progress in extending this type of idea
to  near-integrable systems, and in particular to the Fermi-Pasta-Ulam (FPU) lattice in the long-wave, low-amplitude regime.  The second two authors have studied this problem for the case of a single soliton \cite{hoffman:preprint} and for the case of two counter-propagating solitary waves \cite{hoffman:2008}
and very recently in  \cite{mizumachi:2010}, Mizumachi used linearized B\"acklund transformations
for  the KdV equation
linearized about multi-soliton
solutions to help construct asymptotic $m$-solitons in the FPU model.
}

\section{Linear stability of the Toda $m$-soliton solution}
In \cite{mizumachi:2008} the authors used the B\a cklund transformation along with its linearizations about the zero- and one-soliton solutions to show that linear stability of the zero solution of the
Toda lattice implies the linear stability of the one-soliton solution.  In this section we show
that this can be regarded as the first step in an induction argument: the stability of the $(m-1)$-soliton solution implies that of the $m$-soliton solution.  To proceed we must show (i) that the linearized B\a cklund transformation commutes with the linearized Toda flow, (ii) that the linearized B\a cklund transformation preserves orthogonality with the neutral modes of the linearized Toda system, and (iii) that the linearized B\a cklund transformation is an isomorphism between two spaces (defined below) that depend on these neutral modes. These facts tell us that for each $m$, the diagram in Figure \ref{fig:commuting} commutes.

\begin{figure}[ht]
\begin{center}
\xymatrix{
& & & {u_m(s)\in X_m(s)} \ar@{->}[rrr]_{\Phi_m(t,s)} & & & {u_m(t)\in X_m(t)}\\
 & & & & & & \\
& & & {u_{m-1}(s)\in X_{m-1}(s)} \ar@{->}[rrr]_{\Phi_{m-1}(t,s)} \ar@{->}[uu]^{B_m(s)}& & & {u_{m-1}(t)\in X_{m-1}(t)} \ar@{->}[uu]_{B_m(t)}\\
}
\caption{Commuting diagram used in the induction step}
\label{fig:commuting}
\end{center}
\end{figure}

In Figure \ref{fig:commuting}, $B_m(\tau)$ is an isomorphism derived from the linearized B\a cklund transform at time $\tau$, $\Phi_k(t,s)$ is the evolution according to the linearization of the Toda equation about the solution $U^k$, and
\be \label{eq:subspace}
X_k(\tau):=\{u \in\ell^2_a\times\ell^2_a | \<u ,J^{-1}\partial_{\gamma_i} U^k(\tau)\> = \<u ,J^{-1}\partial_{\kappa_i} U^k(\tau)\> = 0,\ i=1,\ldots,k \}
\ee
where $U^k$ is the $k$-soliton solution in our hierarchy of solutions related by the B\a cklund transform.  We must require these orthogonality conditions on $X_k$ because {\color{black} any perturbation which merely changed
the relative amplitude or phase of the different component solitons results in a new multi-soliton solution which
cannot converge back to the original one.  All other perturbations save those of this type, however,
should decay.}

Throughout the remainder of this section, $(Q',P')$ and $(Q,P)$ correspond to $U_{m-1}$ and $U_m$, respectively, for some $ m$.  Finally, let $(q',p')$ and $(q,p)$ denote solutions to the linearizations of the Toda differential equations about $(Q',P')$ and $(Q,P)$, respectively.

There will be several places where we make an argument that uses the fact that an $m$-soliton solution converges to the sum of $m$ constituent solitons as $t$ tends to either plus or minus infinity.
  Although this fact is fundamental and well-known to experts, we are not aware of a proof that exists in the literature, 
  so we state and prove the following:
  
\begin{lemma}[Resolution of $m$-soliton solutions into one-solitons] \label{lem:resolution}
An $m$ soliton solution converges to a sum of solitons exponentially fast in $\ell^1$ as $t \to \pm \infty$.  More concretely:
Let $Q^m(t;\kappa_1,\cdots,\kappa_m,\gamma_1,\cdots,\gamma_m)$ denote the $m$-soliton solution.  Then there exist phase shifts $\zeta_1^\pm,\cdots , \zeta_m^\pm$ such that for any $\delta \in (0,\sinh \kappa_1 t)$ we have 
\be \lim_{t \to \pm \infty} e^{\delta t} \left\| Q^m(t;\kappa_1,\cdots, \gamma_m) - \sum_{j=1}^m Q^1(t;\kappa_i,\gamma_i + \zeta_i^\pm) \right\|_{\ell^1} = 0. \label{eq:res} \ee
\end{lemma}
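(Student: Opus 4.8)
The plan is to exploit the explicit $\tau$-function (Hirota) representation of the $m$-soliton solution, which is available as a consequence of the integrability invoked in \cite{toda:1975}, and to carry out a frame-by-frame asymptotic analysis. Write the one-soliton phase as $\theta_j(n,t) = \kappa_j n - t\sinh\kappa_j + \gamma_j$, so that $Q^1_n(t;\kappa_j,\gamma_j)$ depends on $n$ and $t$ only through $\theta_j$. The $j$-th constituent soliton travels with speed $c_j := \sinh\kappa_j/\kappa_j$, and since $\kappa \mapsto \sinh\kappa/\kappa$ is strictly increasing on $(0,\infty)$, distinct parameters $\kappa_j$ yield distinct speeds; the constituent solitons therefore separate spatially at a rate linear in $t$. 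First I would record the elementary but crucial observation that in the frame co-moving with soliton $j$ (i.e. along $n = c_j t + O(1)$) the phase $\theta_j$ stays bounded while every other phase satisfies $\theta_k = \kappa_k(c_j - c_k)\,t + O(1) \to \pm\infty$, the sign being determined by whether soliton $k$ is slower or faster than soliton $j$ and by the sign of $t$.

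Next I would establish the pointwise resolution in each moving frame. Factoring the dominant exponential out of the numerator and denominator of the $\tau$-quotient, the $m$-soliton reduces, in the frame of soliton $j$, to $Q^1_n(t;\kappa_j,\gamma_j + \zeta_j^\pm)$ plus the constant plateau contributed by the remaining solitons, with a correction of size $O(e^{-\alpha t})$ for some $\alpha>0$. This both produces the phase shifts $\zeta_j^\pm$ explicitly — they are the sums of the pairwise interaction coefficients $A_{jk}$ accumulated from the solitons lying on one side of soliton $j$ (the relevant set depending on the sign of $t$) — and shows that the corresponding sum $\sum_k Q^1_n(t;\kappa_k,\gamma_k+\zeta_k^\pm)$ reproduces the same leading behaviour, since in this frame the other one-solitons are pinned to their own plateau values to within $O(e^{-\mu t})$.

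The final and most delicate step is to upgrade this frame-wise, pointwise convergence to the claimed exponentially decaying bound in $\ell^1$. Write $D_n(t) := Q^m_n(t) - \sum_{k=1}^m Q^1_n(t;\kappa_k,\gamma_k+\zeta_k^\pm)$ and let $d(n,t) := \min_k |n - c_k t|$ denote the distance from site $n$ to the nearest soliton center. The goal is a uniform pointwise estimate of the form $|D_n(t)| \le K e^{-\delta t}\, e^{-\mu\, d(n,t)}$; summing the geometric weight then gives $\sum_n e^{-\mu d(n,t)} = O(m)$ uniformly in $t$, and hence $\|D(t)\|_{\ell^1} = O(e^{-\delta t})$, which is the assertion (the admissible range of $\delta$ being dictated by the slowest separation rate among the constituent solitons, consistent with the bound involving $\sinh\kappa_1$ in the statement). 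To prove the pointwise estimate I would partition $\Z$ into the $m$ soliton cores and the intervening gap and tail regions, using the explicit exponential spatial decay of each $Q^1$ away from its center together with the $O(e^{-\alpha t})$ core estimate from the previous step.

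The main obstacle is controlling $D_n$ on the gap regions between consecutive solitons and in the two infinite tails. There both $Q^m$ and $\sum_k Q^1$ sit on plateaus, so one must verify not merely that the plateau constants match — which forces the precise, telescoping choice of the additive normalizations and the shifts $\zeta_k^\pm$ — but that the leading exponential tails of $Q^m$ and of the bracketing one-solitons cancel, leaving a residual that is genuinely $O(e^{-\delta t})$ rather than an unsummable $O(e^{-\mu d(n,t)})$ carrying no decay in $t$. Establishing this cancellation is exactly what makes the $\ell^1$, rather than merely pointwise or locally uniform, statement true, and it is where the bulk of the estimation work lies. The case $t \to -\infty$ is handled identically, with the roles of the faster and slower solitons interchanged in the definition of $\zeta_j^-$; alternatively, the whole lemma can be obtained by induction on $m$, peeling off one soliton at a time through the asymptotic analysis of a single B\a cklund step \eqref{eq:BT}, which localizes the same computation to the interaction of one new soliton with an already-resolved $(m-1)$-soliton.
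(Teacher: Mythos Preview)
Your proposal is sound and shares the same starting point as the paper --- the explicit $\tau$-function (Hirota) representation $Q^m = (I-S^{-1})\log\det(I+C)$ with $C_{ij}$ built from the phases $\eta_j = \kappa_j n - t\sinh\kappa_j$ --- and the same underlying mechanism: the phase shifts $\zeta_j^\pm$ are chosen precisely so that the dominant contribution, region by region, cancels between $Q^m$ and $\sum_j Q^1$. The execution, however, is organized differently. You propose a geometric partition of $\Z$ into soliton cores, inter-soliton gaps, and tails, with separate pointwise estimates in each. The paper instead writes the difference in a single closed form,
\[
Q^m - \sum_j Q^1(\gamma_j+\zeta_j^\pm,\kappa_j) = (I-S^{-1})\log(1+\Delta^\pm),
\]
and expands the numerator and denominator of $\Delta^\pm$ as sums over subsets $J\subset\{1,\dots,m\}$, each term weighted by $\prod_{k\in J}\xi_k^2$ with $\xi_k = e^{-\eta_k-\gamma_k}$. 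The point is then purely combinatorial: for each $(n,t)$ there is a dominant subset $J_{\max}$ (namely $\{k:\xi_k(n,t)>1\}$, which is always an interval of the form $\{k^*,\dots,m\}$ for $t>0$), and the definition $\zeta_j^+ = \tfrac12\log\bigl(\det\alpha^{j+1,+}/\det\alpha^{j,+}\bigr)$ is exactly what makes the coefficient $\det\alpha^{J_{\max}} - e^{-2\sum_{k\in J_{\max}}\zeta_k^+}$ vanish in the numerator. The surviving terms then carry an automatic factor $e^{-\kappa_1|n-n_j(t)|}e^{-\kappa_1\lambda(t)}$ relative to the dominant one, and summing in $n$ gives $\|\Delta^\pm\|_{\ell^1}\to 0$ exponentially. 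This subset expansion replaces your casework on gap and tail regions in one stroke and gives the plateau-matching and tail-cancellation you flagged as the delicate step essentially for free; conversely, your frame-by-frame picture is more transparent about where the shifts come from and would adapt more readily to situations without such a clean determinantal formula. One small caveat: the phase shifts here are logarithms of ratios of subdeterminants of the matrix $(\alpha_{ij})=\bigl(\tfrac{1}{1-e^{-(\kappa_i+\kappa_j)}}\bigr)$, not literally sums of pairwise coefficients, so your description of $\zeta_j^\pm$ should be adjusted accordingly.
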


\begin{proof}  See Appendix. \end{proof}

\subsection{Linearized B\a cklund transformation commutes with linearized Toda flow}

It is convenient in this section to define new variables
\be \alpha:=e^{-(Q'-Q-\kappa_m)},\ \ \beta:=e^{-(Q-Q'_-+\kappa_m)}. \label{eq:ab} \ee
With these variables we can rewrite the Toda equations as
\be \label{eq:Toda1} \ba{ll}
\dot{Q} = P & \dot{Q}'=P'\\
\dot{P} = \alpha_-\beta - \alpha\beta_+ & \dot{P}' = \alpha\beta - \alpha_+\beta_+,
\ea \ee
their linearization about $(Q,P)$ and $(Q',P')$ as
\be \label{eq:LT1} \ba{ll}
\dot{q} = p & \dot{q}'=p'\\
\dot{p} = (I-S^{-1})\alpha\beta_+(S-I)q & \dot{p}' = (I-S^{-1})\alpha_+\beta_+(S-I)q,
\ea \ee
and the B\a cklund transformation as
\be \label{eq:BT1} \ba{l}
F_1:=P+\alpha+\beta - 2\cosh\kappa_m =0\\
F_2:=P'+\alpha+\beta_+ - 2\cosh\kappa_m =0\\
\ea \ee

Linearizing (\ref{eq:BT1}) about $(Q',P',Q,P)$ gives us the linearized B\a cklund transformation
\be p + (\alpha - \beta)q + (\beta S^{-1} - \alpha)q' = p' + (\alpha - \beta_+ S)q + (\beta_+ - \alpha)q' = 0 
\label{eq:LBT1} \ee
which can also be written as $DF_1(q',p',q,p) = DF_2(q',p',q,p) = 0$.

\begin{proposition} \label{pr:LBTcommutes}
Suppose that $(q'(0),p'(0))$ and $(q(0),p(0))$ are related via
the linearized B\"{a}cklund transform $\eqref{eq:LBT1}$.  Then $(q'(t),p'(t))$ and $(q(t),p(t))$ are related via $\eqref{eq:LBT1}$ for all $t \in \R$.
\end{proposition}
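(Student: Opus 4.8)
The plan is to recast the claim as a uniqueness statement for a linear ODE. Introduce the two functions
\[ G_1(t) := DF_1(q',p',q,p), \qquad G_2(t) := DF_2(q',p',q,p), \]
obtained by evaluating the linearized B\"acklund expressions in \eqref{eq:LBT1} along the two linearized flows. The hypothesis is exactly $G_1(0)=G_2(0)=0$, and the conclusion is $G_1(t)=G_2(t)=0$ for all $t$. I would prove this by showing that $G=(G_1,G_2)$ satisfies a linear ODE $\dot G = \mathcal{M}(t) G$ with $\mathcal{M}(t)$ a bounded, $t$-continuous operator on $\ell^2_a\times\ell^2_a$; since $G(0)=0$, uniqueness for such an ODE forces $G\equiv 0$.

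First I would assemble the ingredients for differentiating in $t$. Along the linearized flows $\dot q = p$, $\dot q' = p'$, while $\dot p$ and $\dot p'$ are given by \eqref{eq:LT1}. For the coefficients, differentiating \eqref{eq:ab} along the \emph{nonlinear} flow \eqref{eq:Toda1} yields $\dot\alpha = \alpha(P-P')$ and $\dot\beta = \beta(P'_- - P)$, and the corresponding shifted versions follow since $S$ commutes with $\partial_t$. I would also keep the commutation rules $S\alpha = \alpha_+ S$ and $S^{-1}\alpha = \alpha_- S^{-1}$ (likewise for $\beta$) to move shifts past multiplication operators. Finally, I would record the nonlinear B\"acklund identities: differentiating $F_1=0$ and $F_2=0$ from \eqref{eq:BT1} (which hold for \emph{all} $t$, because $(Q,P)$ is by construction the B\"acklund transform of the solution $(Q',P')$) and using \eqref{eq:Toda1} gives $\dot\alpha + \dot\beta = \alpha\beta_+ - \alpha_-\beta$ and $\dot\alpha + \dot\beta_+ = \alpha_+\beta_+ - \alpha\beta$.

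Next I would compute $\dot G_1$ and $\dot G_2$ by the product rule and substitute. The momentum terms organize cleanly: one finds
\[ \dot G_1 = (\alpha-\beta)G_1 - \alpha G_2 + \beta S^{-1} G_2 + R_1, \qquad \dot G_2 = \alpha G_1 - \beta_+ S G_1 + (\beta_+ - \alpha) G_2 + R_2, \]
where the displayed combinations are forced by matching the $p$, $p'$, $p_\pm$, $p'_\pm$ terms (the easy half of the bookkeeping), and $R_1,R_2$ are \emph{purely positional}, involving only $q,q',q_\pm,q'_\pm$ with no momenta. The heart of the proof, and the step I expect to be the main obstacle, is to show $R_1\equiv R_2\equiv 0$. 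I would do this by substituting $\dot\alpha,\dot\beta$, commuting all shifts to a common position, collecting the coefficient of each monomial $q,q_-,q',q'_-,\dots$, and checking that each collapses to zero after invoking $F_1=F_2=0$ and the differentiated identities above. Conceptually this cancellation is not accidental: the relations $\{F_1=F_2=0\}$ cut out a submanifold of the product phase space that the combined nonlinear flow preserves, so the linearization along an orbit preserves its tangent space $\ker(DF_1,DF_2)$, which is precisely \eqref{eq:LBT1}; the vanishing of $R_1,R_2$ is the concrete expression of this invariance.

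To conclude, I would note that since the $m$-soliton profile is bounded, $\alpha,\beta$ and all their shifts are bounded multiplication operators, so $\mathcal{M}(t)$ (built from these together with $S^{\pm 1}$) is a bounded operator on $\ell^2_a\times\ell^2_a$ depending continuously on $t$. Hence $\dot G = \mathcal{M}(t)G$ is a well-posed linear ODE in that space, and $G(0)=0$ gives $G(t)=0$ for all $t$. This is exactly the statement that $(q'(t),p'(t))$ and $(q(t),p(t))$ remain related by the linearized B\"acklund transform \eqref{eq:LBT1} for all $t\in\R$.
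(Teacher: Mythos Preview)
Your proposal is correct and follows essentially the same route as the paper: both show that $(DF_1,DF_2)$ satisfies a homogeneous linear system $\dot{DF}_i = A_i DF_1 + B_i DF_2$ with exactly the coefficients you found, and then invoke uniqueness. The only streamlining in the paper is that it substitutes the B\"acklund relations \eqref{eq:BT1} into $\dot\alpha,\dot\beta$ at the outset to get $\dot\alpha = \alpha(\beta_+-\beta)$, $\dot\beta = \beta(\alpha-\alpha_-)$, which eliminates all $P,P'$ from the computation immediately and makes your remainders $R_1,R_2$ vanish by direct algebra rather than requiring a separate appeal to $F_1=F_2=0$.
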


\begin{proof}
We claim that there are linear operators $A_i$, $B_i$ such that
\be \dot{DF}_1 = A_1 DF_1 + B_1 DF_2; \qquad \dot{DF}_2 = A_2 DF_1 + B_2 DF_2 \label{eq:no1} \ee
holds.  Assume the claim.  Then since $(DF_1,DF_2)$ satisfies a linear system of equations and $(DF_1(0),DF_2(0)) = (0,0)$, it follows that $(DF_1(t),DF_2(t)) = (0,0)$ for all time.  

It remains to prove the claim.  As the expressions $\dot{\alpha}$ and $\dot{\beta}$ appear in $\dot{DF}_1$ and $\dot{DF}_2$, we differentiate $\eqref{eq:ab}$ with respect to time and make use of $\eqref{eq:BT1}$ to obtain
\[ \dot{\alpha} = \alpha(\beta_+ - \beta); \qquad \dot{\beta} = \beta(\alpha - \alpha_-). \]
A calculation now shows that the choice $A_1 = \alpha - \beta$, $B_1 = \beta S^{-1} - \alpha$, $A_2 = \alpha - \beta_+S$, and $B_2 = \beta_+ - \alpha$ yields equality in $\eqref{eq:no1}$.

\end{proof}

\subsection{Symplectic Orthogonality}

The linearization of the Toda equations about an $m$-soliton solution contains a zero-eigenspace of
dimension $2 m$.  We expect decay of solutions of the linearized equations only if they lie in the
complement of this subspace.  This condition is incorporated in the definition of our function
spaces $X_m(t)$ through the inner products with derivatives of the $m$-solition with
respect to speed and phase.  In order to insure that the sums in these inner products converge
we need some information about the spatial asymptotic behavior of these derivatives.  This
information is contained in the following lemma.

\begin{lemma}[Spatial asymptotics for neutral modes] \label{lem:Cfred}
The operator $x \mapsto x_+ - \alpha \beta_+^{-1}$ is Fredholm with index one and a one-dimensional kernel when regarded as acting on any of the spaces $\ell^2_a$, $\ell^2_{-a}$, or $\ell^2_a \cap \ell^2_{-a}$.  Furthermore the norm of its inverse (acting on the orthogonal complement of its kernel) is bounded uniformly in $t$.  Additionally its kernel is spanned by $\partial_{\gamma_m}Q$.

Moreover $\partial_{\gamma_l} Q$ and $\partial_{\gamma_l} P$ lie in $\ell^2_{-a} \cap \ell^2_a$ and $\partial_{\kappa_l} Q$ and $\partial_{\kappa_l} P$ 
live in $\ell^2_{-a}$ for $a \in (0,2\kappa_m)$ and $1 \le l \le m$.
\end{lemma}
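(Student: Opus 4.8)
The plan is to analyze the operator $L_t : x \mapsto x_+ - \alpha\beta_+^{-1} x$ by first understanding its multiplier $\alpha\beta_+^{-1}$ asymptotically in $n$, then exploiting this to establish the Fredholm and kernel claims, and finally reading off the decay rates of the neutral modes. First I would compute the coefficient $\alpha\beta_+^{-1}$ using \eqref{eq:ab}: since $\alpha = e^{-(Q'-Q-\kappa_m)}$ and $\beta = e^{-(Q-Q'_-+\kappa_m)}$, we get $\alpha\beta_+^{-1} = e^{-(Q'-Q-\kappa_m)+(Q_+-Q'+\kappa_m)} = e^{(Q_+ - Q) - (Q'-Q) + 2\kappa_m}$, which I would rewrite in terms of the $R=Q_+-Q$ variables and the $(m-1)$-soliton differences. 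The key structural fact is that as $n\to -\infty$ the solution $Q$ tends to its left asymptotic state ($R\to 0$, using $Q_{-\infty}=0$) and as $n\to +\infty$ it tends to the right state, so $\alpha\beta_+^{-1}$ limits to explicit constants $c_\pm$ at the two spatial ends. Using the asymptotic limit $\lim_{n\to\infty} Q^1 = -2\kappa$ for each constituent soliton together with Lemma \ref{lem:resolution}, I would show these limiting constants satisfy $c_- = e^{2\kappa_m}>1$ and $c_+ = e^{-2\kappa_m}<1$ (or vice versa), and crucially that $L_t$ converges to the constant-coefficient operators $x\mapsto x_+ - c_\pm x$ exponentially fast in $n$.

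Given exponential convergence to constant-coefficient limits, the Fredholm theory is standard: a first-order difference operator $x\mapsto x_+ - c x$ on $\ell^2_a$ is invertible precisely when its symbol $e^{i\theta+a}-c$ never vanishes on the relevant contour, i.e.\ when $c\neq e^{a}$ (treating the weighted shift as multiplication by $e^{a}e^{i\theta}$ on the Fourier side). The index is then computed by the winding/counting of how many of the two asymptotic constants lie on which side of the weight $e^a$; for $a\in(0,2\kappa_m)$ one checks $c_+ = e^{-2\kappa_m} < e^a < e^{2\kappa_m} = c_-$ so the two ends contribute oppositely, forcing index one. I would make this rigorous by writing $L_t$ as a relatively compact perturbation of the piecewise-constant limiting operator and invoking the standard stability of the Fredholm index under such perturbations, noting that the same symbol computation applies verbatim on $\ell^2_{-a}$ (replace $a$ by $-a$) and on the intersection. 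The uniform bound on the inverse in $t$ follows because the soliton profiles are mere translates under the flow, so the coefficient $\alpha\beta_+^{-1}$ is, up to the time-dependent shift of its transition region, a fixed profile; the resolvent estimate is translation-invariant and hence uniform.

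For the kernel, the homogeneous equation $x_+ = \alpha\beta_+^{-1} x$ is a first-order recursion with an explicit one-parameter family of solutions obtained by telescoping the product of the coefficients; the single normalization of that free parameter gives the one-dimensional kernel, and I would verify directly that $\partial_{\gamma_m}Q$ solves it by differentiating the Bäcklund relation \eqref{eq:BT1} (or equivalently \eqref{eq:ab}) with respect to $\gamma_m$. Since the telescoped kernel solution grows like $c_-^{\,n}$ at one end and $c_+^{\,n}$ at the other, comparing these rates against the weights $e^{\pm an}$ confirms membership in each of the three spaces. Finally, for the neutral-mode decay claims I would differentiate the explicit one-soliton formula $Q^1$ and the resolution estimate \eqref{eq:res} with respect to $\gamma_l$ and $\kappa_l$: the $\gamma_l$-derivatives are exponentially localized in $n$ (so lie in $\ell^2_a\cap\ell^2_{-a}$), whereas the $\kappa_l$-derivatives pick up a linear-in-$n$ factor from differentiating the phase $\kappa_l n$, degrading decay at one end and placing them only in $\ell^2_{-a}$ for $a\in(0,2\kappa_m)$. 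The main obstacle I anticipate is establishing the \emph{exponential} (rather than merely pointwise) convergence of the coefficient $\alpha\beta_+^{-1}$ to its limits uniformly in $t$, since this is exactly what licenses both the relatively-compact-perturbation argument for the index and the uniform resolvent bound; this is where Lemma \ref{lem:resolution} does the essential work, and care is needed because the transition region of the coefficient moves with the soliton speeds, so one must track the weighted norm in the comoving frame to avoid losing uniformity in $t$.
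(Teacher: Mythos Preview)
Your treatment of the Fredholm index and the identification of the kernel with $\partial_{\gamma_m}Q$ is essentially the paper's argument: asymptotic hyperbolicity of the first-order recurrence with limits $e^{\pm 2\kappa_m}$ at $n\to\mp\infty$, then Palmer/exponential-dichotomy type reasoning, and differentiation of the B\"acklund relation in $\gamma_m$ to exhibit the kernel element. That part is fine.

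There is, however, a genuine gap in your uniform-in-$t$ bound. You write that ``the soliton profiles are mere translates under the flow, so the coefficient $\alpha\beta_+^{-1}$ is, up to the time-dependent shift of its transition region, a fixed profile.'' This is false for $m>1$: the $m$-soliton is not a traveling wave, the constituent solitons move at distinct speeds $\sinh\kappa_j/\kappa_j$, and during interaction the profile is not a translate of anything. There is no single comoving frame, so translation invariance of the resolvent does not give uniformity. The paper's fix is different: it writes the inverse explicitly via the discrete semigroup $T(n)=\exp\bigl(-\sum_{k=0}^{n-1}(2Q'_k-Q_k-Q_{k+1}-2\kappa_m)\bigr)$, bounds the inverse by $2\|e^{a\cdot}T\|_{\ell^1}$ via Hausdorff--Young, and then uses Lemma~\ref{lem:resolution} to show that as $t\to\pm\infty$ this semigroup converges to the single-soliton semigroup, whose $\ell^1$ bound is already known from \cite{hoffman:2008}. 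Continuity in $t$ then closes the argument on compact time intervals.

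Your proposal for the spatial decay of $\partial_{\gamma_l}Q$, $\partial_{\kappa_l}Q$, etc.\ also does not work as stated. Lemma~\ref{lem:resolution} is a statement about convergence in $\ell^1$ as $t\to\pm\infty$, not about spatial asymptotics at a fixed $t$; differentiating it in $\gamma_l$ or $\kappa_l$ does not yield the weighted-$\ell^2$ membership you need (and the phase shifts $\zeta_j^\pm$ depend on the $\kappa$'s, so the $\kappa_l$-derivative does not reduce to a single one-soliton derivative anyway). The paper proceeds quite differently, by induction on $m$ through the B\"acklund transformation: differentiating the second line of \eqref{eq:BT} produces the first-order recurrence $\partial Q_+=\alpha\beta_+^{-1}\partial Q+\text{(inhomogeneity)}$, where the inhomogeneous term is built from $\partial Q'$, $\partial P'$ and $\partial\kappa_m$. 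The induction hypothesis places $\partial Q'$, $\partial P'$ in the appropriate weighted spaces, and the Fredholm analysis of the homogeneous part (already done) then forces $\partial Q\in\ell^2_{-a}$; differentiating the first line of \eqref{eq:BT} gives $\partial P$ as a bounded combination of terms in $\ell^2_{-a}$. The base case $m=1$ is trivial since $Q'=P'=0$.
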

\begin{proof}
We first examine the Fredholm properties of $x \mapsto x_+ - \alpha \beta_+^{-1}$.  This is a non-autonomous linear first order recurrence, which is moreover asymptotically hyperbolic.  At 
minus infinity the origin for $x_+ = e^{2\kappa_m}x$ is unstable (and since it is one-dimensional this means it has a one dimensional unstable manifold).  At infinity, the origin for $x_+ = e^{-2\kappa_m}x$ is stable (and hence has a zero dimensional unstable manifold).  It follows from Palmer's Theorem and its discrete analogs (e.g.  \cite{palmer:1984}) that the difference operator $x \mapsto x_+ - \alpha\beta_+^{-1} x$ is Fredholm with index one and a one-dimensional kernel when regarded as an operator on any of the spaces $\ell^2_{-a}$, $\ell^2_{a}$, or $\ell^2_a \cap \ell^2_{-a}$, so long as the weight doesn't change the stability type of origin at $\pm \infty$, i.e. so long as $a \in [0,2\kappa_m)$.  This establishes the first claim of the lemma.  

More concretely, define the semigroup $T(n) = e^{-\sum_{k=0}^{n-1}(2Q'_k - Q_k - Q_{k+1} - 2\kappa_m)}$ so that $x_{n+1} - \frac{\alpha_n}{\beta_{n+1}}x_n = y_n$ has solution $x_n = T(n)x_0 + \sum_{k=0}^n T(n-k)y_k$.  For simplicity let $x^*_n = \sum_{k-0}^n T(n-k)y_k$ and let $v$ be a unit vector which spans the kernel.  Then we can write $x = x^* + \nu v$ where $\nu = \frac{ \<x^*,v\>}{\|v\|^2}$.   Thus $\|x\| \le (1 + \frac{1}{\|v\|})\|x^*\| = 2\|x^*\|$.  It follows from the Hausdorff-Young inequality for convolutions that $\|x^*\|_a \le \|e^{a\cdot }T\|_{\ell^1} \|y\|_a$ and hence the norm of the inverse restricted to the orthogonal complement of the kernel is bounded above by $2\|e^{a\cdot}T\|_{\ell^1}$, so long as it is finite.  Note that $T$ has the asymptotic form
\be T(n) = \left\{ \ba{ll} \mathcal{O}(e^{-2\kappa_m n}) & n \to \infty \\ \mathcal{O}(e^{2\kappa_m n}) & n \to -\infty \ea \right. \label{eq:Tasy} \ee
so in particular $\| e^{a \cdot} T\|_{\ell^1}$ is finite.  Moreover, in light of Lemma $\ref{lem:resolution}$, as $t \to \pm \infty$, the semigroup $T$ approaches the semigroup $T^\pm$ corresponding to the single-soliton problem.  This semigroup, studied in \cite{hoffman:2008} satisfies $\| e^{a\cdot}T^\pm \|_{\ell^1} < \infty$.  Thus we have the bound $\|e^{a\cdot} T\| < K$ uniformly in time.  This establishes the second claim of the lemma.

We now establish that the derivatives of $Q$ and $P$ lie in $\ell^2_{-a}$.  The proof is based on differentiating $\eqref{eq:BT}$ and solving the resulting first order recurrence.  The details are below.
Let $\partial$ denote $\partial_{\gamma_l}$ or $\partial_{\kappa_l}$ for some $1 \le l \le m$.  Differentiate the second line of $\eqref{eq:BT}$ and solve for $\partial Q_+$ to obtain
\[  
\partial Q_+  =  \alpha\beta_+^{-1}\partial Q + \beta_+^{-1}\partial P' + (1-\alpha\beta_+^{-1}) \partial Q'  + (\alpha \beta_+^{-1} - 1 - 2\sinh\kappa_m \beta_+^{-1}) \partial \kappa_m 
\]
This is an inhomogeneous first order recurrence whose homogeneous part: $\partial Q_+ = \alpha \beta_+^{-1} \partial Q$ we have studied above.  Note that in the case $\partial = \partial_{\gamma_m}$ the inhomogeneous term is zero, thus $\partial_{\gamma_m}Q$ lies in the kernel as desired.  More generally, the inhomogeneous term has the asymptotic form 
$e^{-\kappa_m} \partial P' + (1-e^{-2\kappa_m})\partial Q' - 2(1-e^{-2\kappa_m})\partial \kappa_m$ as $n \to \infty$ and $e^{\kappa_m}\partial P' + (1-e^{2\kappa_m})\partial Q' + e^{-(Q_n'-Q_{n+1})}(e^{2\kappa_m}-1)(e^{-(Q_n'-Q_n)}-1)\partial \kappa_m$ as $n \to -\infty$.  Make the induction hypothesis that $\partial_{\gamma_l} Q'$ and $\partial_{\gamma_l} P'$ lie in $\ell^2_{-a} \cap \ell^2_a$ while $\partial_{\kappa_l} Q'$ and $\partial_{\kappa_l} P'$ lie in $\ell^2_{-a}$.  Noting that the derivative of $\kappa_m$ with respect to any of the parameters $\gamma_l$ or $\kappa_l$ is either one or zero and $Q_n' - Q_n = \mathcal{O}(e^{2\kappa_m n})$ as $n \to -\infty$, it follows that the inhomogeneous term lies in $\ell^2_{-a}$, and hence that $\partial Q$ lies in $\ell^2_{-a}$ as well.  To see that $\partial P \in \ell^2_{-a}$ simply differentiate the first equation in $\eqref{eq:BT}$ to obtain
\[ \partial P  =  ( \beta - \alpha)\partial Q + \alpha \partial Q' - \beta \partial Q'_- + \left(\beta - \alpha + 2\sinh\kappa_m\right)\partial \kappa_m.
 \]
In light of the fact that the coefficient on $\partial \kappa_m$, which is $e^{\kappa_m}(1-e^{-(Q'-Q)}) - e^{-\kappa_m}(1-e^{-(Q-Q_-')})$, goes to zero rapidly as $n \to -\infty$ and is bounded as $n \to \infty$, it follows that $\partial P$ is a linear combination of vectors in $\ell^2_{-a}$, hence in $\ell^2_{-a}$ itself.  

It remains only to verify the base case for the induction.  In the case $m = 1$, we have the vectors $Q' = P' = 0$, hence $\partial Q' = \partial P' = 0 \in \ell^2_{-a}$.  This completes the proof.
\end{proof}

In light of the above together with the Cauchy-Schwartz inequality for the $\ell^2_a$ - $\ell^2_{-a}$ dual pairing it follows that the linear map $(q,p) \mapsto \< q, \partial P \> - \<p, \partial Q\>$ is continuous when regarded as a map from $\ell^2_a \times \ell^2_a \to \R$.  Due to the form of the B\a cklund transformation, it is easier to work with the variables $q$ and $p$ than it is to work with $u$, so before proceeding we will relate the orthogonality conditions in (\ref{eq:subspace}) to $q$ and $p$.

\begin{lemma} \label{lem:orthoequiv}
Let $U=(R,P)=((S-I)Q,P)$ be a $k$-soliton solution of the Toda lattice, and let $u=(r,p)=((S-I)q,p)$.  Then
\[ \< u,J^{-1}\partial U \> = \< p,\partial Q \> - \< q,\partial P \>, \]
where $\partial$ {\color{black} denotes a partial derivative of U with respect to either $\kappa_j$ or $\gamma_j$,
for some $j=1, \dots , k$.} 
\end{lemma}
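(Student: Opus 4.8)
The plan is to verify the identity by a direct computation of the symplectic pairing, deferring every convergence question to the weighted-space membership established in Lemma~\ref{lem:Cfred}. Concretely, I would invert $J$ block-wise, substitute $u=((S-I)q,p)$ and $\partial U=((S-I)\partial Q,\partial P)$, and then move the difference operators from one slot of the standard $\ell^2$ inner product to the other, using the fact that the shift is unitary with $S^*=S^{-1}$, so that $(S-I)^*=S^{-1}-I=-(I-S^{-1})$.

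First I would make sense of $w:=J^{-1}\partial U$ as an honest element of $\ell^2_{-a}\times\ell^2_{-a}$. Writing $w=(w_1,w_2)$, the equation $Jw=\partial U$ splits into $(S-I)w_2=(S-I)\partial Q$ and $(I-S^{-1})w_1=\partial P$. Since the constant sequences do not lie in $\ell^2_{-a}$, the operator $S-I$ is injective there, so $w_2=\partial Q$, which lies in $\ell^2_{-a}$ by Lemma~\ref{lem:Cfred}. For $w_1$, I would note that on $\ell^2_{-a}$ (with $a>0$) the operator $I-S^{-1}$ is boundedly invertible, its spectrum being the circle of radius $e^{-a}<1$ centered at $1$, which omits the origin. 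Hence $w_1=(I-S^{-1})^{-1}\partial P\in\ell^2_{-a}$ is well defined, again using $\partial P\in\ell^2_{-a}$ from Lemma~\ref{lem:Cfred}. Thus $w\in\ell^2_{-a}\times\ell^2_{-a}$, which is exactly the dual of $\ell^2_a\times\ell^2_a$ under the standard pairing, so $\langle u,w\rangle$ converges absolutely.

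The computation itself is then short. Expanding,
\[ \langle u,J^{-1}\partial U\rangle=\langle (S-I)q,w_1\rangle+\langle p,\partial Q\rangle. \]
Because $S$ is bounded on both $\ell^2_a$ and $\ell^2_{-a}$, and $q\in\ell^2_a$ (here I would use that $S-I$ is boundedly invertible on $\ell^2_a$ for $a>0$, so that $q=(S-I)^{-1}r\in\ell^2_a$), the reindexing underlying summation by parts is legitimate and gives $\langle (S-I)q,w_1\rangle=\langle q,(S^{-1}-I)w_1\rangle=-\langle q,(I-S^{-1})w_1\rangle=-\langle q,\partial P\rangle$. Combining the two terms yields $\langle u,J^{-1}\partial U\rangle=\langle p,\partial Q\rangle-\langle q,\partial P\rangle$, as claimed.

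The only real obstacle is the functional-analytic bookkeeping rather than any hidden algebraic identity: one must make sure that $J^{-1}\partial U$ genuinely lands in the dual space $\ell^2_{-a}\times\ell^2_{-a}$, and that the passage $(S-I)^*=-(I-S^{-1})$ across the inner product is justified by absolute convergence, since $S-I$ fails to be invertible on $\ell^2_{-a}$ and the naive $\ell^2$ pairing of two $\ell^2_a$ vectors need not converge at $-\infty$. Both points are settled by invoking Lemma~\ref{lem:Cfred} to place $\partial Q$ and $\partial P$ in $\ell^2_{-a}$ and by the Cauchy--Schwarz inequality for the $\ell^2_a$--$\ell^2_{-a}$ dual pairing noted just before the lemma.
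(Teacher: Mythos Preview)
Your argument is correct and is essentially the same computation as the paper's: both move the adjoint $(S-I)^*=S^{-1}-I$ across the pairing and then telescope. The only cosmetic difference is that the paper writes out $J^{-1}$ explicitly as the formal series $\bigl(\sum_{k\le 0}S^k,\sum_{k\le -1}S^k\bigr)$ and then collapses $(S^{-1}-I)\sum_{k\le 0}S^k=-I$ and $\sum_{k\le -1}S^k(S-I)=I$ by hand, whereas you keep $w_1=(I-S^{-1})^{-1}\partial P$ abstract and use the defining relation $(I-S^{-1})w_1=\partial P$ directly after taking the adjoint; your version is a bit cleaner and your attention to why $w\in\ell^2_{-a}\times\ell^2_{-a}$ and why $q\in\ell^2_a$ fills in justifications the paper leaves implicit.
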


\begin{proof}
We compute
\[ \ba{rl}
\< u,J^{-1}\partial U\> &= \< (S-I)q,\sum_{k=-\infty}^0 S^k \partial P\> + \< p,\sum_{k=-\infty}^{-1} S^k\partial (S-I)Q\>\\
	&= \< q, (S^{-1}-I)\sum_{k=-\infty}^0 S^k \partial P\> + \< p,\sum_{k=-\infty}^{-1} S^k (S-I)\partial Q\>\\
	&= -\< q,\partial P\> + \< p,\partial Q\>.
\end{array}
\]
Setting both sides to zero proves the lemma.
\end{proof}

We are now able to redefine the spaces $X_k$ in terms of our preferred (for this section) $q$ and $p$ variables:
\[ X_k(t) = \{(q,p) \in \ell^2_a \times \ell^2_a \; | \; \<q,\partial P(t)\> - \<p \partial Q(t)\> = 0 \; \mbox{ for } \partial = \partial_{\gamma_l} \mbox{ or } \partial_{\kappa_l} \mbox{ with } 1 \le l \le k \} \]

\subsection{Linearized B\a cklund transformation preserves orthogonality conditions}
In order to establish the commutativity of the diagram in figure 1, we must show that the map $B$ takes $X_{m-1}$ to $X_m$.  
In particular, we must show that if $(q,p)$ and $(q',p')$ are related by $\eqref{eq:LBT1}$ with $(q',p') \in X_{m-1}$, then $(q,p) \in X_m$. 

It is convenient to define the operators
\be \label{eq:operators}
\ba{ll}
C := \alpha - \beta S^{-1} & \hat{C} := \alpha-S \beta = -\beta_+(S - \alpha\beta_+^{-1})\\
L := \alpha-\beta & M := \alpha -\beta_+,
\ea \ee
where $\alpha$ and $\beta$ are as in the previous section. As we shall see, these operators arise naturally both when differentiating and when linearizing the B\a cklund transformation (\ref{eq:BT}). Since we are interested in the orthogonality conditions from (\ref{eq:subspace}), we differentiate (\ref{eq:BT}) with respect to $\gamma_i$ or $\kappa_i$, $i=1,\ldots,m-1$, to get
\be \label{eq:partial}
\ba{l}
C\partial Q' =  L\partial Q+\partial P\\
\partial P' =  \hat{C}\partial Q + M\partial Q',
\ea \ee
where $\partial=\partial_{\gamma_i}$ or $\partial_{\kappa_i}$, $1\leq i < m$.  These equations help us prove the following:

\begin{proposition} \label{prop:orthoinvariance}
Assume $U'$ and $U$ are related by (\ref{eq:BT}) with $u'$ and $u$ related by $\eqref{eq:LBT1}$.  Then $\< u',J^{-1}\partial U' \> = \< u,J^{-1}\partial U \> ,$ where $\partial=\partial_{\kappa_i}$ or $\partial_{\gamma_i}$ with $1\leq i < m$.
\end{proposition}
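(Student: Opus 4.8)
The plan is to pull everything back to the $q$, $p$ variables via Lemma~\ref{lem:orthoequiv}, which turns the two symplectic pairings into
\[ \langle u,J^{-1}\partial U\rangle = \langle p,\partial Q\rangle - \langle q,\partial P\rangle, \qquad \langle u',J^{-1}\partial U'\rangle = \langle p',\partial Q'\rangle - \langle q',\partial P'\rangle, \]
so the claim reduces to the scalar identity $\langle p,\partial Q\rangle - \langle q,\partial P\rangle = \langle p',\partial Q'\rangle - \langle q',\partial P'\rangle$. The structural observation that drives the whole argument is that the two quadruples $(q',p',q,p)$ and $(\partial Q',\partial P',\partial Q,\partial P)$ satisfy the \emph{same} pair of relations: the former is the linearized transform \eqref{eq:LBT1} by hypothesis, and the latter is \eqref{eq:partial}, because differentiating the B\"acklund transform \eqref{eq:BT} with respect to a shared parameter $\gamma_i$ or $\kappa_i$ with $i<m$ is itself a linearization. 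Here it is essential that $i<m$, so that $\partial\kappa_m=0$ and no inhomogeneous $\partial\kappa_m$ terms survive; in the operator notation of \eqref{eq:operators} both quadruples then obey $p=Cq'-Lq$ and $p'=Mq'-\hat Cq$.

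The computation itself is a substitution. Inserting $p=Cq'-Lq$ and $\partial P = C\partial Q'-L\partial Q$ into $\langle p,\partial Q\rangle - \langle q,\partial P\rangle$, I note that $L=\alpha-\beta$ is a real multiplication (diagonal) operator, hence self-adjoint for the $\ell^2_a$--$\ell^2_{-a}$ pairing, so the two $L$-terms cancel and leave $\langle Cq',\partial Q\rangle - \langle q,C\partial Q'\rangle$. The linchpin is the adjoint identity $C^*=\hat C$, which holds because the transpose of $S^{-1}$ is $S$ and multiplications are self-adjoint; it converts this into $\langle q',\hat C\partial Q\rangle - \langle \hat Cq,\partial Q'\rangle$. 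Running the identical substitution on the primed side, $M=\alpha-\beta_+$ is again diagonal and self-adjoint so its contributions cancel, and the surviving terms are exactly $\langle q',\hat C\partial Q\rangle - \langle \hat Cq,\partial Q'\rangle$. The two sides therefore coincide and the proposition follows.

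Before this algebra is legitimate I must verify that every bracket converges and that moving operators across the pairing produces no boundary terms, which is where Lemma~\ref{lem:Cfred} is needed: it gives $\partial Q,\partial P\in\ell^2_{-a}$, while $q,p\in\ell^2_a$ by assumption, so each bracket is a convergent $\ell^2_a$--$\ell^2_{-a}$ dual pairing by Cauchy--Schwarz; since $\alpha,\beta$ are bounded and $S^{\pm1}$ is bounded on the weighted spaces, the operators $C,\hat C,L,M$ map these spaces to themselves and the transpose manipulations merely reindex absolutely convergent sums. I expect the genuine content, as opposed to bookkeeping, to be precisely the interplay between the self-adjointness of the diagonal operators $L,M$ and the single adjoint relation $C^*=\hat C$: once these are in place the cancellation is forced. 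The main thing to watch is the duality set-up together with the sign and index conventions in \eqref{eq:partial} and \eqref{eq:LBT1}, since the argument only closes if the two substitutions really yield the identical expression $\langle q',\hat C\partial Q\rangle - \langle \hat Cq,\partial Q'\rangle$.
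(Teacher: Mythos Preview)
Your proof is correct and follows essentially the same approach as the paper's: both reduce to the $(q,p)$ variables via Lemma~\ref{lem:orthoequiv} and then exploit the fact that $(q',p',q,p)$ and $(\partial Q',\partial P',\partial Q,\partial P)$ satisfy the same linearized B\"acklund relations, together with the self-adjointness of the multiplication operators $L,M$ and the adjoint identity $C^*=\hat C$. Your organization, reducing both symplectic pairings to the common expression $\langle q',\hat C\partial Q\rangle - \langle \hat Cq,\partial Q'\rangle$, highlights the structural symmetry a bit more explicitly than the paper's one-sided chain of substitutions, but the underlying computation is the same.
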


\begin{proof}
By lemma \ref{lem:orthoequiv} we need to show that $\< p',\partial Q' \> - \< q',\partial P' \> = \< p,\partial Q \> - \< q,\partial P \>$. Using equations (\ref{eq:LBT2})-(\ref{eq:partial}) we compute
\[ \ba{rl}
\<p,\partial Q \> &= \<Cq'-Lq,\partial Q \>\\
	&= \<q' ,\hat{C}\partial Q\> - \<q,L\partial Q  \>\\
	&= \<q',\partial P' - M\partial Q'\> - \<q,C\partial Q' - \partial P\> \\
	&= \<q',\partial P' - M\partial Q'\> - \<\hat{C}q, \partial Q' \> + \<q,\partial P \> \\
	&= \<q',\partial P' \> - \< Mq' + \hat{C}q, \partial Q' \> + \<q,\partial P\> \\
&= \<q',\partial P' \> - \< p', \partial Q' \> + \<q,\partial P\>, \\
	  \ea \]
establishing the desired identity.
\end{proof}

\subsection{Linearized Toda flow preserves orthogonality conditions}
In this section we consider a solution $(q(t),p(t))$ to $\eqref{eq:LT1}$ which for some $t_0 \in \R$ lies in $X_k(t_0)$.  We show that $(q(t),p(t))$ necessarily lies in $X_k(t)$ for all $t \in \R$.  This implies that the maps corresponding to horizontal arrows in the commuting diagram in figure 1 take $X_k(s)$ to $X_k(t)$ (for $k = m-1,m$).

\begin{proposition} \label{prop:LTorth}
Let $(Q,P)$ be a solution to $\eqref{eq:Toda}$ and let $(q,p) \in \ell^2_a \times \ell^2_a$ be a solution to $\eqref{eq:LT1}$.  Then the quantity
\[ \langle p, \partial Q \rangle - \langle q, \partial P \rangle \]
is independent of time.
\end{proposition}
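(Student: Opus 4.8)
The plan is to recognize the stated quantity as a Wronskian-type pairing between two solutions of the linearized Toda flow and to exploit the self-adjointness of the spatial operator that governs that flow. The first observation is that $(\partial Q,\partial P)$ is itself a solution of the linearization $\eqref{eq:LT1}$: since the family $(Q(t;\kappa,\gamma),P(t;\kappa,\gamma))$ solves $\eqref{eq:Toda1}$ for every value of the parameters, differentiating with respect to $\lambda\in\{\gamma_l,\kappa_l\}$ and using that $\partial_\lambda$ commutes with $\partial_t$ shows that $(\partial Q,\partial P)$ satisfies exactly the $(q,p)$-system in $\eqref{eq:LT1}$. Thus I would treat $(q,p)$ and $(\tilde q,\tilde p):=(\partial Q,\partial P)$ as two solutions of the same linear flow and write the quantity of interest as the bilinear form $I(t)=\langle p,\tilde q\rangle-\langle q,\tilde p\rangle=\langle p,\partial Q\rangle-\langle q,\partial P\rangle$.

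Next I would eliminate $p$ by reducing to second order. From $\eqref{eq:LT1}$ we have $\dot q=p$ and $\ddot q=\mathcal{L}(t)q$, where $\mathcal{L}(t)=(I-S^{-1})\,D\,(S-I)$ and $D$ is multiplication by the bounded positive sequence $(\alpha\beta_+)_n$. Since $S^{*}=S^{-1}$ gives $(I-S^{-1})=-(S-I)^{*}$, we may write $\mathcal{L}=-(S-I)^{*}D(S-I)$, which is formally self-adjoint because $D$ is real and diagonal; equivalently the symmetric form $B(x,w):=-\langle D(S-I)x,(S-I)w\rangle$ satisfies $\langle\mathcal{L}x,w\rangle=B(x,w)=B(w,x)=\langle x,\mathcal{L}w\rangle$. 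Differentiating $I$ and substituting $\dot q=p$, $\dot{\tilde q}=\tilde p$, $\dot p=\mathcal{L}q$, $\dot{\tilde p}=\mathcal{L}\tilde q$, the $\langle p,\tilde p\rangle$ terms cancel and one is left with
\[ \dot I=\langle \mathcal{L}q,\tilde q\rangle-\langle q,\mathcal{L}\tilde q\rangle=B(q,\tilde q)-B(q,\tilde q)=0, \]
so $I$ is constant in time, which is the assertion.

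The main obstacle is not the algebra but the functional-analytic bookkeeping: the two factors live in different spaces, $q,p\in\ell^2_a$ while $\partial Q,\partial P\in\ell^2_{-a}$ by Lemma $\ref{lem:Cfred}$, so every pairing above is the $\ell^2_a$--$\ell^2_{-a}$ dual pairing rather than a genuine $\ell^2$ inner product, and I must make sure the adjoint manipulations produce no boundary contributions at $\pm\infty$. I would handle this by noting that $S$ and hence $S-I$ are bounded on each weighted space $\ell^2_{\pm a}$ and that $D$ is bounded, so by Cauchy--Schwarz for the dual pairing every sum appearing in $B(q,\tilde q)$ converges absolutely; the re-indexing identity $\langle (S-I)x,w\rangle=\langle x,(S^{-1}-I)w\rangle$ is then a legitimate rearrangement of an absolutely convergent series, carrying the self-adjointness of $\mathcal{L}$ over to the dual pairing without boundary terms. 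Once this is in place the computation of $\dot I$ is justified term by term, completing the proof; via Lemma $\ref{lem:orthoequiv}$ this is precisely the statement that the symplectic orthogonality defining $X_k$ is preserved by the horizontal arrows of Figure $\ref{fig:commuting}$.
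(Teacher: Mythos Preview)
Your proof is correct and follows essentially the same argument as the paper: both differentiate the pairing, use $\dot q=p$ and $\partial\dot Q=\partial P$ to cancel the cross terms, and then exploit the symmetry of $\mathcal{L}=(I-S^{-1})D(S-I)$ to kill the remainder. The paper carries out this last step as an explicit telescoping sum (with the vanishing of the boundary terms coming from $q\in\ell^2_a$, $\partial Q\in\ell^2_{-a}$) rather than phrasing it as self-adjointness of $\mathcal{L}$ in the $\ell^2_a$--$\ell^2_{-a}$ dual pairing, but the content is identical.
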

\begin{proof}
\[ \ba{lll} 
\frac{d}{dt} \langle p, \partial Q \rangle - \langle q, \partial P \rangle & = &  
\langle \dot{p}, \partial Q \rangle - \langle q, \partial \dot{P} \rangle \\ \\
& = & 
\displaystyle{\sum_{n \in \Z} \left\{ \left[ e^{-(Q_{n+1}-Q_n)}(q_{n+1} - q_n) - e^{-(Q_n-Q_{n-1})}(q_n - q_{n-1}) \right] \partial Q_n \right.}
\\ \\ 
& & \displaystyle{- \left. \left[e^{-(Q_{n+1}-Q_n)}(\partial Q_{n+1} - \partial Q_n) - e^{-(Q_n - Q_{n-1})} (\partial Q_n - \partial Q_{n-1}) \right] q_n \right\}} \\ \\
& = & \displaystyle{\sum_{n \in \Z} e^{-(Q_{n+1} - Q_n)}[q_{n+1} \partial Q_n - \partial Q_{n+1} q_n] - e^{-(Q_n - Q_{n-1})}[q_n\partial Q_{n-1} - q_{n-1} \partial Q_n] }\\ \\
& = & 0.
\ea 
\]
In the last line we have used the fact that the sum in the third line of the equation is a telescoping series.  The boundary terms at $n = \pm \infty$ vanish because $q \in \ell^2_a$ and $\partial Q \in \ell^2_{-a}$.
\end{proof}

\subsection{Linearized B\a cklund transformation is an isomorphism}
The linearized B\a cklund tranformation gives us a time-invariant relationship between the linearizations about $(m-1)$- and $m$-soliton Toda solutions, and it preserves the orthogonality conditions from $X_{m-1}$ to $X_m$; the goal of this section is to show that it also implicitly defines an isomorphism $B:(q',p')\mapsto(q,p)$ between subspaces of the form (\ref{eq:subspace}) as in Figure \ref{fig:commuting}.

We can write the linearized B\a cklund transformation in terms of the operators defined in the previous section as
\be \label{eq:LBT2}
\ba{l}
Cq' = Lq + p\\
p' = \hat{C}q + Mq'.
\ea \ee
The goal is to define an isomorphism $B : (q',p') \mapsto (q,p)$ from $X_{m-1}$ to $X_m$.  The map is defined as follows: given $(q',p')$, solve $\hat{C}q = p' - Mq'$ for $q$ and then let $p = Cq' - Lq$.  Similarly, given $(q,p)$, solve $Cq' = Lq + p$ for $q'$ and then let $p' = \hat{C}q + Mq'$.  To implement this, we must first understand the solvability conditions, i.e. Fredholm properties, for the maps $C$ and $\hat{C}$.

Note that it follows from Lemma $\ref{lem:Cfred}$ together with the fact that the multiplication operator $\beta_+$ is bounded with bounded inverse that $\hat{C}$ is Fredholm with index one and a one dimensional kernel when regarded as acting on either $\ell^2_a$ or $\ell^2_{-a}$.  Together with the Fredholm alterative and the fact that $\ell^2_a$ is dual to $\ell^2_{-a}$ this implies that $C$ is Fredholm with index $-1$, injective, and has range given by the orthogonal complement to $\partial_{\gamma_m} Q$ when regarded as acting on $\ell^2_a$.  Since $M$ and $L$ are multiplication operators and the sequences $\alpha$ and $\beta$ are uniformly bounded, it follows that $M$ and $L$ are bounded as well.

Since the range of $C$ is orthogonal to $\partial_{\gamma_m}Q$, it follows that in order to implement the map $B$ we must impose the orthogonality condition $\< Lq + p, \partial_{\gamma_m}Q \> = 0$.
The following lemma relates this orthogonality condition back to the familiar symplectic orthogonality condition that is used in the definition of $X_m$.

\begin{lemma} \label{lem:Lqp}
For $q,p\in \ell^2_a$, $(Lq+p)\in \ran(C)$ if and only if $\< u,J^{-1}\partial_{\gamma_m}U\> = 0$.
\end{lemma}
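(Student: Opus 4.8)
The plan is to reduce the statement to a single scalar identity. By the Fredholm analysis carried out just before the lemma, $C$ acting on $\ell^2_a$ is injective with range equal to the orthogonal complement of $\partial_{\gamma_m}Q$; consequently $(Lq+p)\in\ran(C)$ if and only if $\<Lq+p,\partial_{\gamma_m}Q\>=0$. It therefore suffices to show that this scalar quantity coincides with the symplectic pairing $\<u,J^{-1}\partial_{\gamma_m}U\>$. Since $\partial_{\gamma_m}$ is among the admissible derivatives for the $m$-soliton $U$, Lemma \ref{lem:orthoequiv} gives $\<u,J^{-1}\partial_{\gamma_m}U\>=\<p,\partial_{\gamma_m}Q\>-\<q,\partial_{\gamma_m}P\>$. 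All of these pairings converge: $q,p\in\ell^2_a$ while $\partial_{\gamma_m}Q,\partial_{\gamma_m}P\in\ell^2_{-a}$ by Lemma \ref{lem:Cfred}, so Cauchy--Schwarz for the $\ell^2_a$--$\ell^2_{-a}$ dual pairing applies.

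The only ingredient still needed is the pointwise identity $L\,\partial_{\gamma_m}Q=-\partial_{\gamma_m}P$. I would obtain this by differentiating the first B\a cklund equation $F_1=P+\alpha+\beta-2\cosh\kappa_m=0$ from \eqref{eq:BT1} with respect to $\gamma_m$. Since the $(m-1)$-soliton $(Q',P')$ is independent of $\gamma_m$ and $\partial_{\gamma_m}\kappa_m=0$, one has $\partial_{\gamma_m}\alpha=\alpha\,\partial_{\gamma_m}Q$ and $\partial_{\gamma_m}\beta=-\beta\,\partial_{\gamma_m}Q$; substituting into the differentiated $F_1$ collapses it to $\partial_{\gamma_m}P+(\alpha-\beta)\partial_{\gamma_m}Q=0$, i.e. $\partial_{\gamma_m}P=-L\,\partial_{\gamma_m}Q$. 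Equivalently, this is the first line of \eqref{eq:partial} specialized to $\partial=\partial_{\gamma_m}$, where the left-hand side $C\partial_{\gamma_m}Q'$ vanishes because $\partial_{\gamma_m}Q'=0$.

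With this identity in hand the computation is immediate: because $L=\alpha-\beta$ is a bounded multiplication operator by a real sequence, it is symmetric for the $\ell^2_a$--$\ell^2_{-a}$ pairing, so $\<Lq+p,\partial_{\gamma_m}Q\>=\<q,L\,\partial_{\gamma_m}Q\>+\<p,\partial_{\gamma_m}Q\>=-\<q,\partial_{\gamma_m}P\>+\<p,\partial_{\gamma_m}Q\>=\<u,J^{-1}\partial_{\gamma_m}U\>$, and comparing with the first paragraph yields the claimed equivalence. I expect no genuine obstacle here; the only points requiring care are bookkeeping ones, namely confirming that the range characterization of $C$ is precisely orthogonality to the kernel vector $\partial_{\gamma_m}Q$ (and not to some other element), and checking that the $\gamma_m$-derivatives decay fast enough — Lemma \ref{lem:Cfred} places them in $\ell^2_{-a}\cap\ell^2_a$ — so that every pairing and the adjoint manipulation are justified.
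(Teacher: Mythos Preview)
Your proposal is correct and follows essentially the same route as the paper: reduce membership in $\ran(C)$ to orthogonality against $\partial_{\gamma_m}Q$ via the Fredholm description, then use the self-adjointness of the multiplication operator $L$ together with the identity $L\,\partial_{\gamma_m}Q=-\partial_{\gamma_m}P$ and Lemma~\ref{lem:orthoequiv} to identify the resulting scalar with the symplectic pairing. Your write-up is in fact slightly more careful than the paper's, since you explicitly derive $L\,\partial_{\gamma_m}Q=-\partial_{\gamma_m}P$ (the paper's \eqref{eq:partial} is stated only for $1\le i<m$, so the $i=m$ case is used there without comment).
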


\begin{proof}
Since $C$ is Fredholm with $\coker(C)=span\{\partial_{\gamma_m}Q\}$, $(Lq+p)\in \ran(C)$ if and only if 
\[
\begin{array}{rl}
0 &= \< Lq+p, \partial_{\gamma_m}Q\> \\
	&= \< q,L\partial_{\gamma_m}Q\> + \< p,\partial_{\gamma_m}Q\>\\
	&= -\< q, \partial_{\gamma_m}P\> + \< p, \partial_{\gamma_m} Q\>
\end{array}
\]
This combined with Lemma \ref{lem:orthoequiv} completes the proof.
\end{proof}

We can also differentiate (\ref{eq:BT}) with respect to $\kappa_m$, using the fact that $\partial_{\kappa_m}Q'=\partial_{\kappa_m}P'=0$, to obtain
\be \label{eq:dk}
L\partial_{\kappa_m}Q+\partial_{\kappa_m}P=\hat{C}\partial_{\kappa_m}Q = 2\sinh\kappa_m.
\ee
These equations help us show that the map $B^{-1}:(q,p) \mapsto(q',p')$ is injective, provided our second orthogonality condition is satisfied.

\begin{lemma} \label{lem:injective}
Suppose (\ref{eq:LBT2}) is satisfied with $q'=p'=0$, and that $\< u,J^{-1}\partial_{\kappa_m}U\>=0$.  Then $q=p=0$.
\end{lemma}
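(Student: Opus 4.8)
The plan is to show that the only element of $\ker(B^{-1})$ lying in $X_m$ is zero, by first identifying the kernel explicitly and then using the single remaining orthogonality condition to eliminate it. First I would set $q'=p'=0$ in the operator form \eqref{eq:LBT2} of the linearized B\"acklund transformation; the two equations collapse to $\hat{C}q=0$ and $p=-Lq$. By Lemma \ref{lem:Cfred}, together with the fact that $\beta_+$ is invertible so that $\hat{C}=-\beta_+(S-\alpha\beta_+^{-1})$ has the same kernel as $S-\alpha\beta_+^{-1}$, the operator $\hat{C}$ has a one-dimensional kernel on $\ell^2_a$ spanned by $\partial_{\gamma_m}Q$. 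Hence $q=c\,\partial_{\gamma_m}Q$ for some scalar $c$, and then $p=-cL\partial_{\gamma_m}Q=c\,\partial_{\gamma_m}P$, the last equality being the $\gamma_m$-derivative of the first line of \eqref{eq:BT}. In other words the candidate kernel element is exactly the neutral mode $u=c\,\partial_{\gamma_m}U$, and the whole lemma reduces to showing that the second symplectic orthogonality condition excludes this mode.

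Next I would feed this into the orthogonality hypothesis. By Lemma \ref{lem:orthoequiv}, $\langle u,J^{-1}\partial_{\kappa_m}U\rangle=\langle p,\partial_{\kappa_m}Q\rangle-\langle q,\partial_{\kappa_m}P\rangle$, and substituting $p=-Lq$ together with the self-adjointness of the real multiplication operator $L=\alpha-\beta$ turns the right-hand side into $-\langle q,\,L\partial_{\kappa_m}Q+\partial_{\kappa_m}P\rangle$. At this point I would invoke \eqref{eq:dk}, which says precisely that $L\partial_{\kappa_m}Q+\partial_{\kappa_m}P=2\sinh\kappa_m$. The pairing therefore becomes $-2\sinh\kappa_m\sum_{n}q_n$, and the hypothesis $\langle u,J^{-1}\partial_{\kappa_m}U\rangle=0$ together with $\kappa_m>0$ forces $\sum_n q_n=0$, i.e. $c\sum_n(\partial_{\gamma_m}Q)_n=0$. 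All of these pairings converge by the $\ell^2_a$--$\ell^2_{-a}$ duality together with the localization of the neutral modes established in Lemma \ref{lem:Cfred}.

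Everything so far is routine algebra; the real content is the claim that $\sum_n(\partial_{\gamma_m}Q)_n\neq0$, equivalently that the neutral modes $\partial_{\gamma_m}U$ and $\partial_{\kappa_m}U$ are not symplectically orthogonal. I would prove this by a positivity argument. Since $\partial_{\gamma_m}Q$ spans $\ker(S-\alpha\beta_+^{-1})$, it solves the scalar first-order recurrence $x_{n+1}=\alpha_n\beta_{n+1}^{-1}x_n$, so every solution is a constant multiple of $n\mapsto T(n)$, the semigroup appearing in Lemma \ref{lem:Cfred}. As $T(n)$ is manifestly strictly positive (it is an exponential, and \eqref{eq:Tasy} records its decay at $\pm\infty$), the sequence $\partial_{\gamma_m}Q$ never vanishes and has a single sign; moreover it lies in $\ell^2_a\cap\ell^2_{-a}\subset\ell^1$, so its sum over $n$ converges. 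A convergent sum of nonzero terms all of one sign cannot be zero, hence $\sum_n(\partial_{\gamma_m}Q)_n\neq0$. Combined with the previous paragraph this forces $c=0$, whence $q=0$ and $p=-Lq=0$, as claimed.

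The step I expect to be the main obstacle is exactly this last non-degeneracy, since the preceding reduction is forced. The delicate points are (i) correctly matching $\ker\hat{C}$ with $\partial_{\gamma_m}Q$ and justifying the convergence of all inner products through the dual weighted spaces, and (ii) the sign-definiteness of the kernel generator, which is what ultimately distinguishes the $\gamma_m$ direction from the $\kappa_m$ direction and makes the symplectic pairing nondegenerate. Once the positivity of $T$ is in hand, the conclusion is immediate.
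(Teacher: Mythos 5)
Your proof is correct, and for the crucial step it takes a genuinely different --- and more elementary --- route than the paper. The reduction is the same in both: setting $q'=p'=0$ in \eqref{eq:LBT2} gives $\hat{C}q=0$ and $p=-Lq$, so by Lemma \ref{lem:Cfred} and \eqref{eq:partial} one has $(q,p)=c\,(\partial_{\gamma_m}Q,\partial_{\gamma_m}P)$, and then Lemma \ref{lem:orthoequiv}, the self-adjointness of the multiplication operator $L$, and \eqref{eq:dk} convert the hypothesis into $c\,(2\sinh\kappa_m)\sum_n(\partial_{\gamma_m}Q)_n=0$; everything hinges on the non-degeneracy $\sum_n(\partial_{\gamma_m}Q)_n\neq0$. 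The paper proves this dynamically: by Proposition \ref{pr:LBTcommutes} the relation persists in time, the proportionality factor $\mu(t)$ is shown to be constant, and the limit $t\to\infty$ is taken so that, via Lemma \ref{lem:resolution}, the sum is replaced by $\sum_n\partial_{\gamma_m}Q^1_n(t;\gamma_m,\kappa_m)$, which is single-signed because the one-soliton profile is monotone. You instead argue at fixed time: $\partial_{\gamma_m}Q$ spans the kernel of $x\mapsto x_+-\alpha\beta_+^{-1}x$, a first-order recurrence with strictly positive multipliers, so it never vanishes and has one sign, and since it lies in $\ell^2_a\cap\ell^2_{-a}\subset\ell^1$ its sum converges and is therefore nonzero. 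Your argument needs neither the commutation with the linearized flow, nor the constancy of $\mu$, nor Lemma \ref{lem:resolution}; in particular it avoids the step where the paper applies Lemma \ref{lem:resolution} (a statement about $Q^m$ itself) to the derivative $\partial_{\gamma_m}Q^m$, which strictly speaking requires extending that lemma to parameter derivatives. What the paper's route buys is a structural interpretation (the obstruction is exactly the phase mode of the asymptotically free $m$-th soliton) and reuse of machinery the induction already requires; what yours buys is a shorter, static, self-contained proof. One small imprecision: the final pairing $\langle q,\,2\sinh\kappa_m\rangle$ is an $\ell^1$--$\ell^\infty$ pairing, since the constant sequence lies in neither $\ell^2_a$ nor $\ell^2_{-a}$; it is your own observation that $\partial_{\gamma_m}Q\in\ell^1$, not the weighted duality, that justifies its convergence.
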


\begin{proof}
It is a consequence of Proposition $\ref{pr:LBTcommutes}$ that the solution $(q(t),p(t))$ to $\eqref{eq:LT1}$ with initial condition $(q,p)$ is related via $\eqref{eq:LBT2}$ to $(q',p') = (0,0)$.  From $\eqref{eq:LBT2}$ we have $\hat{C}q = p' - Mq' = 0$ so 
\be q_n(t) = \mu(t) \partial_{\gamma_m} Q_n(t) \label{eq:lem:in:1} \ee for some $\mu : \R \to \R$.  Similarly 
\be p = Cq' - Lq = - \mu L\partial_{\gamma_m}Q \label{eq:lem:in:2}. \ee  Differentiate $\eqref{eq:lem:in:1}$ with respect to time to obtain
$p = \dot{\mu} \partial_{\gamma_m} Q + \mu \partial_{\gamma_m}P$.  Use $\eqref{eq:lem:in:2}$ to obtain
\[ \dot{\mu} \partial_{\gamma_m} Q = -\mu [ L \partial_{\gamma_m} Q + \partial_{\gamma_m}P ] = 0. \]
The last equality follows from $\eqref{eq:partial}$.  Since $\partial_{\gamma_m} Q(t)$ is non-vanishing as an element of $\ell^\infty$ it follows that $\dot{\mu} \equiv 0$.  We have now established that $(q,p) = \mu(\partial_{\gamma_m}Q,-K\partial_{\gamma_m}Q)$ with $\mu$ independent of $t$.
Using the second orthogonality condition we compute 
\[ \ba{lll} 
0 & = & \displaystyle{\lim_{t \to \infty} \langle p, \partial_{\kappa_m} Q \rangle - \langle q, \partial_{\kappa_m} P \rangle} \\ \\
& = & \displaystyle{-\mu \lim_{t \to \infty}  \langle L \partial_{\gamma_m} Q, \partial_{\kappa_m} Q \rangle - \mu \langle \partial_{\gamma_m} Q, \partial_{\kappa_m} P \rangle} \\ \\
& = & \displaystyle{ -\mu \lim_{t \to \infty}   \langle \partial_{\gamma_m} Q,  (L\partial_{\kappa_m} Q + \partial_{\kappa_m} P) \rangle} \\ \\
& = & \displaystyle{ -\mu (2\sinh \kappa_m) \lim_{t \to \infty}   \sum_{n \in \Z} \partial_{\gamma_m} Q_n } \\ \\
& = & \displaystyle{ -\mu (2\sinh \kappa_m) \lim_{t \to \infty}   \sum_{n \in \Z} \sum_{l = 1}^m \partial_{\gamma_m} Q_n^1(t;\gamma_l,\kappa_l) }\\ \\
& = & \displaystyle{ -\mu (2\sinh \kappa_m) \lim_{t \to \infty}   \sum_{n \in \Z} \partial_{\gamma_m} Q_n^1(t;\gamma_m,\kappa_m) }
\ea 
\]
In the fourth line we have used $\eqref{eq:dk}$.  In the fifth line we have used Lemma $\ref{lem:resolution}$.  Note that $Q_n^1(t;\gamma_m,\kappa_m)$ is a monotone function of $n-ct$ and $\partial_{\gamma_m}Q_n^1$ is the derivative of this function, hence is of one sign.  Thus 
\[ \sum_{n \in \Z} \partial_{\gamma_m} Q_n^1(t;\gamma_m,\kappa_m) \ne 0. \]
This implies that $\mu = 0$ as desired
\end{proof}

\begin{theorem}\label{thm:isomorphism}
Fix $t\in\R$ and define the space
\[
X(t):=\{(q,p)\in\ell^2_a\times\ell^2_a : \< u,J^{-1}\partial_{\gamma_m}U\> = \< u,J^{-1}\partial_{\kappa_m}U\> = 0\},
\]
Let
\[
\begin{array}{rl}
B(t): & \ell^2_a\times\ell^2_a\rightarrow X(t)\\
	&(q'(t),p'(t))\mapsto(q(t),p(t))\\
\tilde{B}(t): & X(t) \rightarrow \ell^2_a\times\ell^2_a\\
	&(q(t),p(t))\mapsto(q'(t),p'(t))\\
\end{array}
\]
be defined implicitly by the equations (\ref{eq:LBT2}).  Then $B(t)$ is an isomorphism with inverse $B^{-1}(t)=\tilde{B}(t)$.
\end{theorem}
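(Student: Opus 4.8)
The plan is to exhibit the two prescriptions attached to (\ref{eq:LBT2}) as bounded linear maps running in opposite directions and to check that they are mutual inverses, so that $B(t)$ is a bicontinuous bijection. Throughout I would lean on the Fredholm dictionary already assembled in Lemma \ref{lem:Cfred} and the paragraph after it: on $\ell^2_a$ the operator $\hat C$ is surjective with one–dimensional kernel $\mathrm{span}\{\partial_{\gamma_m}Q\}$, its formal adjoint $C$ (in the $\ell^2_a$–$\ell^2_{-a}$ pairing) is injective with closed range $\{\partial_{\gamma_m}Q\}^\perp$, the inverses of $C$ and $\hat C$ on the orthogonal complements of their kernels are bounded uniformly in $t$, and $L,M$ are bounded multiplication operators. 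I would also use Lemma \ref{lem:orthoequiv} freely to pass between $\<u,J^{-1}\partial U\>$ and $\<p,\partial Q\>-\<q,\partial P\>$.

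First I would show $\tilde B$ is well defined on $X(t)$. Given $(q,p)\in X(t)$, the equation $Cq'=Lq+p$ is solvable exactly when $Lq+p\in\ran(C)=\{\partial_{\gamma_m}Q\}^\perp$, which by Lemma \ref{lem:Lqp} is equivalent to the $\gamma_m$ condition $\<u,J^{-1}\partial_{\gamma_m}U\>=0$ built into $X(t)$; injectivity of $C$ makes $q'$ unique, and then $p'=\hat Cq+Mq'$ is determined. Boundedness of $C^{-1}$ on its range together with boundedness of $L,M,\hat C$ shows $\tilde B:(q,p)\mapsto(q',p')$ is a well-defined bounded linear map $X(t)\to\ell^2_a\times\ell^2_a$.

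The delicate step is that $B$ is well defined \emph{into} $X(t)$. Given $(q',p')\in\ell^2_a\times\ell^2_a$, surjectivity of $\hat C$ yields a solution $q$ of $\hat Cq=p'-Mq'$, unique only modulo $\ker\hat C=\mathrm{span}\{\partial_{\gamma_m}Q\}$, and we set $p=Cq'-Lq$. The $\gamma_m$ condition of the output is automatic and independent of this choice, since $Lq+p=Cq'\in\ran(C)=\{\partial_{\gamma_m}Q\}^\perp$ so Lemma \ref{lem:Lqp} applies. The residual freedom $q\mapsto q+c\,\partial_{\gamma_m}Q$, $p\mapsto p-c\,L\partial_{\gamma_m}Q$ must be pinned by the $\kappa_m$ condition: writing $\<p,\partial_{\kappa_m}Q\>-\<q,\partial_{\kappa_m}P\>$ as an affine function of $c$, the coefficient of $c$ is $-\<\partial_{\gamma_m}Q,\,L\partial_{\kappa_m}Q+\partial_{\kappa_m}P\>$, which by (\ref{eq:dk}) equals $-2\sinh\kappa_m\sum_n\partial_{\gamma_m}Q_n$. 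The main obstacle is to certify this slope is nonzero so that a unique admissible $c$ exists. For this I would note $\partial_{\gamma_m}Q\in\ell^2_a\cap\ell^2_{-a}\subset\ell^1$ (Lemma \ref{lem:Cfred}), so the sum converges; that the total momentum $\sum_n P_n$ is conserved and $\gamma_m$–independent, making $\sum_n\partial_{\gamma_m}Q_n$ constant in $t$; and then the explicit single-soliton telescoping already invoked in Lemma \ref{lem:injective} (via Lemma \ref{lem:resolution}) to evaluate it, giving $\sum_n\partial_{\gamma_m}Q_n=-2\neq0$. This fixes $c$ uniquely, places the output in $X(t)$, and the estimates above give continuity of $B$.

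Finally I would verify the compositions are the identity. For $\tilde B\circ B=\mathrm{id}$: the constructed pair obeys $Cq'=Lq+p$, so injectivity of $C$ forces $\tilde B$ to return the original $q'$, and hence $p'=\hat Cq+Mq'$. For $B\circ\tilde B=\mathrm{id}$: applying $B$ to $(q',p')=\tilde B(q,p)$ gives $\hat C\tilde q=\hat Cq$, so $\tilde q=q+c\,\partial_{\gamma_m}Q$; the choice $c=0$ already returns $(q,p)\in X(t)$, and the uniqueness of the admissible $c$ from the previous step forces exactly that choice. (Injectivity of $\tilde B$ is, separately, precisely Lemma \ref{lem:injective}.) Since $B$ and $\tilde B$ are bounded and mutually inverse, $B(t)$ is an isomorphism with $B^{-1}(t)=\tilde B(t)$.
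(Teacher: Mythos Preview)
Your argument is correct and follows essentially the same route as the paper's: $\tilde B$ is well defined by injectivity of $C$ together with Lemma~\ref{lem:Lqp}, $B$ is well defined by surjectivity of $\hat C$ with the one-dimensional kernel freedom fixed by the $\kappa_m$ condition, and the two are mutual inverses (the paper packages this as ``$\tilde B$ injective via Lemma~\ref{lem:injective} and surjective via the construction of $B$''). Your computation of the $\kappa_m$ slope as $-2\sinh\kappa_m\sum_n\partial_{\gamma_m}Q_n$ via (\ref{eq:dk}) is more explicit than the paper's terse assertion that the relevant pairing is nonzero; the exact value $-2$ you quote for $\sum_n\partial_{\gamma_m}Q_n$ is not actually justified, but only nonvanishing is needed and your one-sign argument (borrowed from the proof of Lemma~\ref{lem:injective}) establishes that.
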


\begin{proof}
We will suppress dependence on $t$ throughout this proof.  Suppose $(q,p)\in X$ is given.  By Lemma \ref{lem:Lqp}, $Lq+p$ is in the range of $C$ and so there exists $q'\in\ell^2_a$ satisfying the first equation of (\ref{eq:LBT2}). Since $\ker(C)=\{0\}$, $q'$ is uniquely defined, as is $p':=(Mq'+\hat{C}q)\in\ell^2_a$.  By Lemma \ref{lem:injective}, $\tilde{B}$ is also injective.  

Given $q',p'\in \ell^2_a$, since $\ran(\hat{C})=\ell^2_a$ there exists $q\in\ell^2_a$ satisfying the second equation of (\ref{eq:LBT2}).  Then $p\in\ell^2_a$ is defined by the first equation of (\ref{eq:LBT2}). Moreover $Lq+p\in \ran(C)$, and since $\< \partial_{\kappa_m} U,\partial_{\gamma_m} U \> \neq 0$ for $m\geq 1$ we can shift $u$ by a multiple of $J^{-1}\partial_{\gamma_m}U$ to get $u \bot J^{-1}\partial_{\kappa_m} U$. Thus, $\tilde{B}$ is surjective and therefore an isomorphism with inverse $B$.
\end{proof}

\begin{corollary} \label{cor:Xm_isomorphism} 
$B(t):X_{m-1}(t)\rightarrow X_m(t)$ is an isomorphism.
\end{corollary}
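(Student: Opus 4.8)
The plan is to derive the corollary from Theorem~\ref{thm:isomorphism} and Proposition~\ref{prop:orthoinvariance}, which between them control the two ``new'' orthogonality conditions (those indexed by $m$) and the $m-1$ ``old'' ones (those indexed by $i<m$), respectively. The key structural observation is that the target space splits as
\[ X_m(t) = X(t) \cap \bigcap_{i=1}^{m-1} \{ u : \langle u, J^{-1}\partial_{\gamma_i}U\rangle = \langle u, J^{-1}\partial_{\kappa_i}U\rangle = 0\}, \]
so that $X_m(t)$ consists of exactly those elements of $X(t)$ which in addition satisfy the orthogonality conditions for indices $1,\ldots,m-1$.

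First I would check that $B(t)$ maps $X_{m-1}(t)$ into $X_m(t)$. By Theorem~\ref{thm:isomorphism}, the image $(q,p)=B(t)(q',p')$ of any pair automatically lies in $X(t)$, so the two conditions indexed by $m$ hold for free. For the remaining $m-1$ conditions, Proposition~\ref{prop:orthoinvariance} supplies the identity $\langle u',J^{-1}\partial U'\rangle = \langle u, J^{-1}\partial U\rangle$ for every $\partial = \partial_{\gamma_i}$ or $\partial_{\kappa_i}$ with $i<m$. Hence if $(q',p')\in X_{m-1}(t)$, so that the left-hand sides all vanish, the right-hand sides vanish as well, placing $(q,p)$ in $X_m(t)$.

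Next I would establish bijectivity. Injectivity is immediate: $B(t)$ is already injective on the whole of $\ell^2_a\times\ell^2_a$ by Theorem~\ref{thm:isomorphism}, so its restriction to the subspace $X_{m-1}(t)$ is injective. For surjectivity, take any $(q,p)\in X_m(t)\subset X(t)$ and let $(q',p')=\tilde{B}(t)(q,p)=B(t)^{-1}(q,p)$ be the unique preimage supplied by Theorem~\ref{thm:isomorphism}. Applying Proposition~\ref{prop:orthoinvariance} once more, the equality of the two pairings shows that because $(q,p)$ satisfies the orthogonality conditions for $i=1,\ldots,m-1$, its preimage $(q',p')$ satisfies the defining conditions of $X_{m-1}(t)$. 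Thus every element of $X_m(t)$ has a preimage in $X_{m-1}(t)$, and $B(t)$ restricts to an isomorphism $X_{m-1}(t)\to X_m(t)$.

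The argument is essentially a diagram chase, and I do not anticipate a genuine obstacle; the only point demanding care is bookkeeping the indices correctly. Specifically, one must keep in mind that the conditions defining $X_{m-1}(t)$ are phrased through derivatives of the $(m-1)$-soliton $U'$, whereas those defining $X_m(t)$ involve derivatives of the $m$-soliton $U$, and that Proposition~\ref{prop:orthoinvariance} is precisely the bridge matching the two families for indices $i<m$. Combined with Lemma~\ref{lem:orthoequiv}, which identifies $\langle u,J^{-1}\partial U\rangle$ with the pairing $\langle p,\partial Q\rangle - \langle q,\partial P\rangle$ used in the working definition of the $X_k$, this correspondence is unambiguous.
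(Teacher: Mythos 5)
Your proof is correct, and it rests on the same two ingredients as the paper's -- Theorem~\ref{thm:isomorphism} and Proposition~\ref{prop:orthoinvariance} -- but it closes the argument by a different mechanism. The paper's proof is a codimension count: since $B(t)$ is an isomorphism of $\ell^2_a\times\ell^2_a$ onto $X(t)$ and, by Proposition~\ref{prop:orthoinvariance}, carries $X_{m-1}(t)$ into $X_m(t)$, the image $B(t)X_{m-1}(t)$ is a closed subspace of $X(t)$ of codimension $2(m-1)$ contained in $X_m(t)$, which has the same finite codimension in $X(t)$; hence the two coincide and surjectivity follows. You instead exploit the symmetry of Proposition~\ref{prop:orthoinvariance}: the identity $\langle u',J^{-1}\partial U'\rangle = \langle u,J^{-1}\partial U\rangle$ holds for \emph{any} pair related by the linearized B\"acklund transformation, so it applies just as well to $(q,p)\in X_m(t)$ and its preimage $(q',p')=\tilde{B}(t)(q,p)$, which therefore lies in $X_{m-1}(t)$. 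This yields surjectivity directly, with injectivity inherited by restriction. Your route is slightly longer than the paper's one-line argument but more self-contained: it never invokes the codimension bookkeeping, which implicitly requires that the $2(m-1)$ defining functionals $u\mapsto\langle u,J^{-1}\partial_{\gamma_i}U'\rangle$, $u\mapsto\langle u,J^{-1}\partial_{\kappa_i}U'\rangle$ be linearly independent (so that the codimensions are exactly $2(m-1)$) and that codimension is additive for nested closed subspaces. Both arguments are legitimate, and yours transfers verbatim to situations where those codimension facts are less transparent.
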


\begin{proof}
This follows from Proposition \ref{prop:orthoinvariance} and the fact that $X_{m-1}(t)$ and $X_m(t)$ are codimension $2(m-1)$  subspaces of $\ell^2_a\times\ell^2_a$ and $X(t)$, respectively.
\end{proof}

In the following we will need to show that $B(t)$ and $B(t)^{-1}$ are bounded uniformly in time.  

\begin{lemma} \label{lem:Bbound} The maps $B(t)$ and $B^{-1}(t)$ are uniformly bounded for $t\in\R$.
\end{lemma}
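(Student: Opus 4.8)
The plan is to reduce the uniform boundedness of $B(t)$ and $B^{-1}(t)$ to uniform control of the elementary operators out of which they are built, and then to isolate the single genuinely delicate point, which is the size of the neutral-mode correction used to land in $X_m(t)$. First I would record that $\alpha$ and $\beta$ are bounded above and below uniformly in $t$ (their logarithms are differences of soliton profiles, hence bounded with limits at $\pm\infty$), so that the multiplication/shift operators $C$, $\hat C$, $L$, $M$ of $\eqref{eq:operators}$ and the multiplier $\beta_+^{-1}$ all have operator norms bounded uniformly in $t$ on $\ell^2_a$ and $\ell^2_{-a}$. Next, Lemma $\ref{lem:Cfred}$ furnishes a right inverse $R$ of $\hat C$ whose norm is bounded uniformly in $t$; combined with boundedness of $\beta_+^{-1}$ this is really the only quantitative input beyond the trivial bounds.

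For $B^{-1}(t)=\tilde B(t)$ the argument is routine. Given $(q,p)$ in the domain, the first orthogonality condition guarantees $Lq+p\in\ran(C)$ (Lemma $\ref{lem:Lqp}$), and since $\ker C=\{0\}$ there is a unique $q'$ with $Cq'=Lq+p$; then $p'=\hat Cq+Mq'$. To bound $q'$ uniformly I would use the duality $C=\hat C^{T}$ with respect to the $\ell^2_a$--$\ell^2_{-a}$ pairing: transposing $\hat C R=I$ on $\ell^2_{-a}$ gives $R^{T}C=I$ on $\ell^2_a$, so $R^{T}$ is a left inverse of $C$ with $\|R^{T}\|=\|R\|$. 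Hence $\|q'\|_a\le\|R\|(\|L\|\,\|q\|_a+\|p\|_a)$ and $\|p'\|_a\le\|\hat C\|\,\|q\|_a+\|M\|\,\|q'\|_a$, every constant being uniform in $t$.

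For $B(t)$ the subtlety is the projection onto $X_m(t)$. Given $(q',p')$ I would set $q_0=R(p'-Mq')$, $p_0=Cq'-Lq_0$ (so $\|(q_0,p_0)\|_a\lesssim\|(q',p')\|_a$ uniformly), and then correct to $(q,p)=(q_0,p_0)+s\,(\partial_{\gamma_m}Q,\partial_{\gamma_m}P)$, choosing $s$ so that the second orthogonality condition holds. A short computation gives $s=-\omega_m^{-1}[\langle p_0,\partial_{\kappa_m}Q\rangle-\langle q_0,\partial_{\kappa_m}P\rangle]$, where $\omega_m=\langle\partial_{\gamma_m}P,\partial_{\kappa_m}Q\rangle-\langle\partial_{\gamma_m}Q,\partial_{\kappa_m}P\rangle$ is independent of $t$ by Proposition $\ref{prop:LTorth}$ and nonzero by the computation in Lemma $\ref{lem:injective}$. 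Thus the correction map is rank one, and its norm on the subspace $Y(t)=\{(q_0,p_0):\langle p_0,\partial_{\gamma_m}Q\rangle-\langle q_0,\partial_{\gamma_m}P\rangle=0\}$ equals $|\omega_m|^{-1}\|(\partial_{\gamma_m}Q,\partial_{\gamma_m}P)\|_a\cdot\|\ell_t|_{Y(t)}\|$, where $\ell_t$ is the functional $(q_0,p_0)\mapsto\langle p_0,\partial_{\kappa_m}Q\rangle-\langle q_0,\partial_{\kappa_m}P\rangle$. The key observation is that the $(q_0,p_0)$ produced above automatically lie in $Y(t)$, because $Lq_0+p_0=Cq'\in\ran(C)$ and Lemma $\ref{lem:Lqp}$ applies; on $Y(t)$ the functional $\ell_t$ may be computed with $\partial_{\kappa_m}U$ replaced by $\partial_{\kappa_m}U-\lambda\,\partial_{\gamma_m}U$ for any $\lambda$, so that $\|\ell_t|_{Y(t)}\|=\inf_\lambda\|\partial_{\kappa_m}U-\lambda\,\partial_{\gamma_m}U\|_{-a}$.

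The whole lemma therefore comes down to the uniform bound
\[ N(t):=\|\partial_{\gamma_m}U(t)\|_a\cdot\inf_\lambda\|\partial_{\kappa_m}U(t)-\lambda\,\partial_{\gamma_m}U(t)\|_{-a}\le K, \]
and this is the step I expect to be the main obstacle. Neither factor is individually bounded: since the $m$th constituent soliton travels at speed $c_m=\sinh\kappa_m/\kappa_m>1$ and $a<2\kappa_m$, one has $\|\partial_{\gamma_m}U\|_a\sim e^{ac_mt}$, while $\|\partial_{\kappa_m}U\|_{-a}$ carries a secular factor and grows like $|t|\,e^{-ac_mt}$, so a naive Cauchy--Schwarz bound on $s$ would give growth of order $|t|$. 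The point is that the secular part of $\partial_{\kappa_m}U$ is exactly a (time-dependent) multiple of $\partial_{\gamma_m}U$ -- this is the traveling-wave identity $\partial_\kappa=-c'(\kappa)\,t\,\partial_\gamma+(\text{bounded})$ for a single soliton -- and it is annihilated by the infimum over $\lambda$, leaving $\inf_\lambda\|\partial_{\kappa_m}U-\lambda\partial_{\gamma_m}U\|_{-a}\sim e^{-ac_mt}$ and hence $N(t)=O(1)$. To make this rigorous I would first verify the estimate for a single soliton, where the $\lambda$-corrected mode is a fixed profile in the soliton frame and $N(t)$ is bounded (indeed has finite limits) by explicit computation, and then transfer it to the $m$-soliton as $t\to\pm\infty$ using Lemma $\ref{lem:resolution}$: the coefficients of the recurrences of Lemma $\ref{lem:Cfred}$ that determine $\partial_{\gamma_m}Q$ and $\partial_{\kappa_m}Q$ converge to their single-soliton counterparts, and the uniform invertibility in Lemma $\ref{lem:Cfred}$ yields convergence of the modes with control of the weighted norms. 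Continuity of $N(t)$ on compact time intervals then gives the uniform bound, and combining $N(t)\le K$ with the estimates of the previous paragraphs shows that $B(t)$ and $B^{-1}(t)$ are bounded uniformly in $t$.
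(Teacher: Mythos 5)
Your handling of $\tilde B=B^{-1}$ and of the operators $L$, $M$, $C$, $\hat C$ coincides with the paper's entire proof: the paper simply invokes the uniform bounds on these operators and on the restricted inverses of $C$ and $\hat C$ from Lemma~\ref{lem:Cfred}, and your transpose/duality argument is a clean way of making its phrase ``the uniform boundedness of $C^{-1}$ is similar'' precise. Where you go beyond the paper is in the second half: you correctly observe that $B(t)$, as defined in Theorem~\ref{thm:isomorphism}, also involves a shift along the kernel direction $(\partial_{\gamma_m}Q,\partial_{\gamma_m}P)$ in order to land in $X(t)$, and that nothing in the list of operator bounds controls the size of that shift. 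The paper's proof is silent on this point, so your instinct that this is the genuinely delicate step is right.

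However, the bound you reduce everything to, $N(t)=\|\partial_{\gamma_m}U\|_a\,\inf_\lambda\|\partial_{\kappa_m}U-\lambda\,\partial_{\gamma_m}U\|_{-a}\le K$, is not just unproven in your sketch; it fails for $m\ge 2$, so the proposal cannot close as written. The subtraction $\lambda\,\partial_{\gamma_m}U$ is localized at the $m$-th soliton, but for a multi-soliton $\partial_{\kappa_m}U$ also carries pieces localized at the \emph{other} solitons, because the asymptotic phases of the solitons $j\ne m$ inside $U^m$ depend on $\kappa_m$: their total collision shifts (the $\zeta_j^+-\zeta_j^-$ of Lemma~\ref{lem:resolution}) manifestly depend on $\kappa_m$, so the components of $\partial_{\kappa_m}U$ along $\partial_\gamma(\mbox{soliton }j)$ at $t\to+\infty$ and $t\to-\infty$ cannot both vanish, and a far-field computation with the degenerate B\"acklund relation between one-solitons shows the one in the fatal configuration is nonzero. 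Such a piece has $\ell^2_{-a}$-norm of order $e^{-ac_jt}$, whence $N(t)\gtrsim e^{a(c_m-c_j)t}$, which diverges as $t\to+\infty$ whenever $c_j<c_m$, and symmetrically as $t\to-\infty$ whenever $c_j>c_m$; for $m\ge 2$ one of the two always occurs. Your single-soliton verification plus ``transfer'' via Lemma~\ref{lem:resolution} cannot repair this: the transfer only shows the offending coefficients converge --- to nonzero limits. What is true, and all that the induction actually uses (Corollary~\ref{cor:Xm_isomorphism}), is boundedness of $B(t)$ restricted to $X_{m-1}(t)$: for such inputs, Proposition~\ref{prop:orthoinvariance} makes the intermediate solution $(q_0,p_0)$ symplectically orthogonal to $\partial_{\gamma_j}U^m$ for every $j<m$ (and to $\partial_{\gamma_m}U^m$ by Lemma~\ref{lem:Lqp}), so in your rank-one correction you may subtract the optimal combination $\sum_{j}\lambda_j\,\partial_{\gamma_j}U^m$ rather than a multiple of $\partial_{\gamma_m}U^m$ alone; that larger infimum does remove the other-soliton pieces and makes the product uniformly bounded. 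Note this criticism applies verbatim to the paper's own two-line proof, which never addresses the correction at all; you found the right obstacle, but reduced it to the wrong quantity --- one that is in fact unbounded.
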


\begin{proof}
This follows from the uniform boundedness of the operators $L$, $M$, $C$ and $\hat{C}$, as well as the inverses of $C$ and $\hat{C}$, appropriately restricted.  $L$ and $M$ are multiplication operators by sequences which are uniformly bounded, hence their operator norms are uniformly bounded.  $C$ and $\hat{C}$ are the sum of a multiplication operator by a uniformly bounded sequence and a shift composed with a multiplication operator by a uniformly bounded sequence, hence their operator norms are bounded uniformly too.  The uniform boundedness of $\hat{C}^{-1}$ was addressed in Lemma $\ref{lem:Cfred}$.  The uniform boundedness of $C^{-1}$ is similar.
\end{proof}

\subsection{Linear stability of the Toda flow}
To prove Theorem \ref{thm:main} it only remains to show that the decay rates of $\Phi_{m-1}(t,s)$ are inherited by $\Phi_m(t,s)$  in Figure \ref{fig:commuting}.  

\begin{proposition} Let $c>1$, $a\in(0,2\kappa)$ with $\kappa=\min_{1\leq i\leq m} \kappa_i$, and $\beta = ca-2\sinh(a/2)$.  Suppose $u'(t)\in X\subset \ell^2_a \times \ell^2_a$ satisfies
\[
u'_t = JH''(U')u',
\]
where as above $U'=((S-I)Q',P')$ is an $m-1$ soliton solution with solitons moving at speeds $\kappa_1,\ldots,\kappa_{m-1}$.  Suppose further that for some constant $\tilde{K}>0$ we have
\[ ||e^{a(n-ct-T)}u'(t)|| \leq \tilde{K}e^{-\beta(t-s)}||e^{a(n-cs-T)}{\color{black} u'(s)}||. \]
Then for any $u\in Y:=\{u\in X| \<u,J^{-1}\partial_{\gamma_m}U\> = \<u,J^{-1}\partial_{\kappa_m}U\> = 0\}$, where $U$ is an $m$-soliton solution with solitons moving at speeds $\kappa_1,\ldots,\kappa_m$ and is related to $U'$ by (\ref{eq:BT}), there exists a constant $K>0$ such that
\[ ||e^{a(n-ct-T)}u(t)|| \leq Ke^{-\beta(t-s)}||e^{a(n-cs-T)}u(s)||\ , \]
{\color{black} where $u(t)$ is a solution of the Toda equations, linearized about the 
m-soliton solution $U(t)$.}
\end{proposition}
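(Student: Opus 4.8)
The plan is to exploit the commuting diagram of Figure \ref{fig:commuting}: the linearized B\"acklund transform $B(t)$ conjugates the flow $\Phi_{m-1}$ about the $(m-1)$-soliton to the flow $\Phi_m$ about the $m$-soliton, so the decay hypothesis on $u'$ transfers directly to $u$. The three ingredients already in hand are exactly what is needed: Proposition \ref{pr:LBTcommutes} (the linearized B\"acklund relation is preserved by the flow, equivalently $\Phi_m(t,s)B(s)=B(t)\Phi_{m-1}(t,s)$), Corollary \ref{cor:Xm_isomorphism} (that $B(t)$ restricts to an isomorphism of $X=X_{m-1}(t)$ onto $Y=X_m(t)$), and Lemma \ref{lem:Bbound} (that $B(t)$ and $B^{-1}(t)$ are bounded on $\ell^2_a\times\ell^2_a$ uniformly in $t$).

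First I would set $u'(s):=B^{-1}(s)u(s)$. Since $u\in Y$ and $B(s)$ is an isomorphism of $X_{m-1}(s)$ onto $X_m(s)$ by Corollary \ref{cor:Xm_isomorphism}, this puts $u'(s)\in X$, so the hypothesized decay applies to the solution it generates. Let $u'(t):=\Phi_{m-1}(t,s)u'(s)$ be the resulting solution of the $(m-1)$-linearization; by Proposition \ref{prop:LTorth} it remains in $X$ for all $t$, so the hypothesized estimate is valid along it. By the commutativity relation of Proposition \ref{pr:LBTcommutes} we then have $u(t)=\Phi_m(t,s)u(s)=\Phi_m(t,s)B(s)u'(s)=B(t)\Phi_{m-1}(t,s)u'(s)=B(t)u'(t)$, so $u(t)$ is precisely the image of $u'(t)$ under $B(t)$ at every time.

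It remains to transfer the weighted estimate. The key bookkeeping observation is that the moving weight factors as $e^{a(n-ct-T)}=e^{-a(ct+T)}e^{an}$, i.e. it differs from the fixed $\ell^2_a$ weight only by the spatially constant scalar $e^{-a(ct+T)}$; consequently, for any operator $A$ bounded on $\ell^2_a$ one has $\|e^{a(n-ct-T)}Au\|\le \|A\|_{\ell^2_a\to\ell^2_a}\,\|e^{a(n-ct-T)}u\|$, the scalar cancelling. Applying this with $A=B(t)$ at time $t$ and $A=B^{-1}(s)$ at time $s$, and inserting the hypothesized decay of $u'$, gives
\[
\|e^{a(n-ct-T)}u(t)\|\le \|B(t)\|\,\|e^{a(n-ct-T)}u'(t)\|\le \tilde K\,\|B(t)\|\,e^{-\beta(t-s)}\,\|e^{a(n-cs-T)}u'(s)\|\le \tilde K\,\|B(t)\|\,\|B^{-1}(s)\|\,e^{-\beta(t-s)}\,\|e^{a(n-cs-T)}u(s)\|.
\]
By Lemma \ref{lem:Bbound} the quantity $\sup_t\|B(t)\|\cdot\sup_s\|B^{-1}(s)\|$ is finite, so taking $K:=\tilde K\,\sup_t\|B(t)\|\,\sup_s\|B^{-1}(s)\|$ yields the claimed bound with the same rate $\beta$.

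The genuine work has already been done in establishing the commuting diagram, so I expect no serious obstacle here; this proposition is the assembly step. The only points requiring care are (i) checking that $B^{-1}(s)u(s)$ lands in $X$ and stays there under the flow, which is guaranteed by Corollary \ref{cor:Xm_isomorphism} together with Proposition \ref{prop:LTorth}, and (ii) the weighted-norm compatibility described above, which is precisely what lets the time-uniform operator bounds of Lemma \ref{lem:Bbound} pass through the moving exponential weight without degrading the exponential rate.
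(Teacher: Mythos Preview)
Your proposal is correct and matches the paper's own proof, which is essentially the one-line observation that $u(t)=B(t)u'(t)$ together with the hypothesis and Lemma~\ref{lem:Bbound}. You have simply spelled out the details (the role of Corollary~\ref{cor:Xm_isomorphism} and Proposition~\ref{prop:LTorth}, and the fact that the moving weight differs from the fixed $\ell^2_a$ weight by a spatial constant) that the paper leaves implicit.
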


\begin{proof}  {\color{black} This follows from the fact that $u(t) = B(t) u'(t)$ (see Figure 1),
the bound on $u'$ in the hypothesis of the Proposition,  and Lemma \ref{lem:Bbound}.} \end{proof} 

The proof of Theorem $\ref{thm:main}$ now follows from induction on $m$.  The initial step is provided by \cite{mizumachi:2008}, Lemma 10 and the above lemma proves the inductive step.

\section{Nonlinear Stability}
The purpose of this section is to leverage the linear stability result, Theorem $\ref{thm:main}$ proven in the last section, in order to prove the nonlinear stability result Theorem $\ref{thm:stab}$.  The strategy of proof is a natural generalization of \cite{pego:1994,friesecke:2002} to the setting of multi-soliton solutions.  Roughly speaking, in section 3.1 we show that so long as we are willing to let the parameters $\kappa_i$ and $\gamma_i$ modulate, then we can impose an orthogonality condition on any (sufficiently small) perturbation of an $m$-soliton solution; in section 3.2 we derive coupled ODEs for the modulating parameters $\kappa$ and $\gamma$ and the perturbation $v$; in section 3.3 we obtain estimates which allow us to determine the long-time behavior of solutions to these ODEs with small initial data via the method of bootstrapping.

\subsection{$(v,\xi)$ Coordinates}

Let $\kappa = (\kappa_1,\cdots \kappa_m)$ and $\gamma = (\gamma_1, \cdots \gamma_m)$ be functions of time which we will specify later.  Let $\xi = (\gamma_1,\kappa_1,\gamma_2,\kappa_2,\cdots,\gamma_m,\kappa_m)$.  Let $v$ be defined by the equation 
\begin{equation}\label{eq:vdef}
u = U(t;\kappa(t),\gamma(t)) + v(t) \equiv U(t;\xi(t)) + v(t)\ ,
\end{equation}
where as in the preceding section, $U$ is an $m$-soliton solution of the Toda system.

We compute
\be [\partial_t - JH''(U(t;\kappa(t_0),\gamma(t_0)))]v = JR - \sum_{j=1}^{2m} \partial_{\xi_i} U \dot{\xi}_i. \label{eq:main} \ee
Here $R = R_1 + R_2$ with $R_1 = H'(U + v) - H'(U) - H''(U)v$, 
and $R_2 = [H''(U(t;\xi(t_0)))-H''(U(t;\xi(t)))]v$.  We work in the space $X_m$ as defined in $\eqref{eq:subspace}$.

\begin{lemma}[Symplectic Form Restricted to the Tangent Space] \label{lem:tan}
Let 
\[ \mathcal{A}_{ij}(t) := \langle J^{-1} \partial_{\xi_i}U(t,\xi_*),{\color{black} \partial_{\xi_j}}U(t,\xi_*)\rangle. \]
Then $\mathcal{A}$ is independent of $t$.   Moreover, $\mathcal{A}$ is block diagonal with $2 \times 2$ blocks given by 
\[ \left( \ba{ll} 0 & \alpha_{0,k} \\ \\ -\alpha_{0,k} & \alpha_{1,k} \ea \right) \] 
where $\alpha_{0,k} = \frac{1}{c_k} \frac{d}{dc}H(u_{c_k})\frac{dc}{d\kappa}$ and $\alpha_{1,k} = \left(\frac{d}{dc} \int_\R r_{c_k}\right) \frac{d}{dc} \left( c \int_\R r_{c_k} \right)\left(\frac{dc}{d\kappa}\right)^2$.  
\end{lemma}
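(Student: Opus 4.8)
The plan is to establish the two assertions separately: first that $\mathcal{A}$ is independent of $t$, and then to identify its entries by passing to the limit in which the $m$-soliton separates into its constituent one-solitons, where everything can be computed explicitly.

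For the time-independence I would exploit the fact that each ``column'' $w^{(i)} := \partial_{\xi_i} U$ solves the linearized Toda equation: differentiating $\dot U = JH'(U)$ in the parameter $\xi_i$ gives $\partial_t \partial_{\xi_i} U = J H''(U)\partial_{\xi_i} U$. Then
\[ \frac{d}{dt}\mathcal{A}_{ij} = \langle J^{-1}\partial_t w^{(i)}, w^{(j)}\rangle + \langle J^{-1} w^{(i)}, \partial_t w^{(j)}\rangle = \langle H''(U) w^{(i)}, w^{(j)}\rangle - \langle w^{(i)}, H''(U) w^{(j)}\rangle, \]
where the first term uses $J^{-1}J = I$ exactly and the second moves the local first-order operator $J$ onto $H''(U)w^{(j)}$. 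This last transfer is legitimate because $H''(U)$ is a local difference operator whose range consists of sequences decaying at both spatial infinities (it carries a factor $e^{-R}$ with $R$ localized), so the summation-by-parts leaves no boundary term; since $H''(U)$ is self-adjoint, the two terms cancel. I would flag here the one genuinely delicate point for later: one may \emph{not} conclude from this that $\mathcal{A}$ is antisymmetric with vanishing diagonal, because to see antisymmetry one would instead transfer $J^{-1}$ onto $w^{(j)}$, and for the speed parameters $\partial_{\kappa_l}Q$ fails to decay as $n \to +\infty$ (it tends to $-2$), so that summation-by-parts \emph{does} leave a boundary contribution. That contribution is exactly what produces the nonzero entry $\alpha_{1,k}$.

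Knowing $\mathcal{A}$ is constant, I would evaluate it as $t \to +\infty$. By Lemma \ref{lem:resolution} the $m$-soliton resolves, exponentially fast in $\ell^1$, into $m$ one-solitons centered near $c_k t$ with distinct speeds $c_k = \sinh\kappa_k/\kappa_k > 1$; in the $(R,P)$ variables the derivatives $\partial_{\xi_i}U$ localize about the soliton whose parameter is differentiated. For $i,j$ belonging to different solitons $k \neq k'$ the supports separate linearly in $t$, and using the $\ell^1$ control from Lemma \ref{lem:resolution} together with the boundedness of the inversion of $J$ against the $\ell^2_a$--$\ell^2_{-a}$ pairing (Lemma \ref{lem:Cfred}) the pairing tends to $0$; being constant, it vanishes identically. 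This yields the block-diagonal structure, with the $k$-th block governed entirely by the one-soliton $u_{c_k}$.

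It then remains to compute the three independent entries of a single block. The $\gamma_k$--$\gamma_k$ entry vanishes because $\partial_{\gamma_k}U \in \ell^2_a \cap \ell^2_{-a}$ (Lemma \ref{lem:Cfred}) decays at both ends, so the symplectic pairing really is antisymmetric on it. For the $\gamma_k$--$\kappa_k$ entries the $\gamma_k$-derivative again decays, so Lemma \ref{lem:orthoequiv} applies with no boundary term and gives the antisymmetric $\pm\alpha_{0,k}$ pattern; evaluating the sum on the explicit soliton and invoking the standard modulation-theory identity relating $\langle J^{-1}\partial_c u_c, \partial_\gamma u_c\rangle$ to the speed-derivative of the energy (cf. \cite{pego:1994,friesecke:2002}) produces $\alpha_{0,k} = \frac{1}{c_k}\frac{d}{dc}H(u_{c_k})\frac{dc}{d\kappa}$. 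The $\kappa_k$--$\kappa_k$ entry is the crux and, as noted, is where Lemma \ref{lem:orthoequiv} fails: inverting $J$ against the summation operator $(I-S^{-1})^{-1}$ and using $\partial_{\kappa_k}Q \to -2$ at $+\infty$ leaves precisely the boundary term $-2\sum_n \partial_{\kappa_k}P_n$, that is $-2\,\partial_{\kappa_k}$ of the total momentum; re-expressing the total stretch and momentum as $\int_\R r_{c}$ and (up to constants) $c\int_\R r_c$ and converting $\partial_{\kappa_k}$ via $dc/d\kappa$ then delivers $\alpha_{1,k}$. I expect this last one-soliton evaluation to be the main obstacle: one must track the non-decaying tail of $\partial_{\kappa_k}Q$ through the inversion of $J$, justify interchanging the $t \to \infty$ limit with the lattice sums (using the exponential $\ell^1$ rate of Lemma \ref{lem:resolution} for domination), and match the surviving boundary terms to the correct speed-derivatives of the conserved energy and momentum. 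By contrast, the time-independence and the block-diagonality are routine once the decay of $H''(U)w^{(j)}$ and the separation of soliton supports are in hand.
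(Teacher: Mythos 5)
Your proposal follows essentially the same route as the paper's own proof: the identical time-derivative computation (differentiate the Toda flow in the parameters, use $J^{-1}J=I$ on one term and skew-adjointness of the symplectic structure on the other, cancel by self-adjointness of $H''(U)$), followed by evaluating the now-constant matrix $\mathcal{A}$ as $t\to\infty$, where the soliton resolution of Lemma \ref{lem:resolution} decouples the blocks and the $2\times 2$ blocks reduce to the one-soliton computation of Lemma 2.1 in \cite{friesecke:2002} --- which is exactly the citation the paper makes, so your sketch of the individual block entries is a (welcome) expansion of a step the paper outsources. One correction to your justification of the boundary-term-free summation by parts: $H''(U)=\mathrm{diag}\left(e^{-R},I\right)$ is a multiplication operator, not a difference operator, and since $R\to 0$ at both spatial infinities the factor $e^{-R}$ tends to $1$ and confers no decay at all; the step is nevertheless valid because in the $(R,P)$ variables the vectors $\partial_{\xi_j}U$ themselves decay at both ends (even for $\kappa$-derivatives, since $\partial_{\kappa_l}Q$ tends to constants, so $\partial_{\kappa_l}R=(S-I)\partial_{\kappa_l}Q$ decays) while $J^{-1}\partial_{\xi_i}U$ is merely bounded --- which is precisely the content of the paper's caveat that the form is skew only ``when acting on zero mean sequences,'' and the same mechanism you correctly identify as the source of the nonvanishing diagonal entry $\alpha_{1,k}$.
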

\begin{proof}
Observe
\[ \ba{lll}
\frac{d}{dt} \mathcal{A}_{ij} & = & \langle J^{-1} \partial_{\xi_i} \partial_t U, \partial_{\xi_i} U \rangle + \langle J^{-1} \partial_{\xi_i}U, \partial_{\xi_i} \partial_t U \rangle \\ \\
& = & \langle J^{-1} J H''(U)\partial_{\xi_i}U, \partial_{\xi_j} U \rangle + \langle J^{-1} U_{\xi_i}, JH''(U)\partial_{\xi_i}U \rangle
 \\ \\
& = & \langle H''(U)\partial_{\xi_i} U - H''(U)\partial_{\xi_i} U, \partial_{\xi_j} U \rangle = 0 
\ea 
\]
Here we have used the fact that $H''(U)$ is self-adjoint and also that the form $\langle \cdot, J^{-1} \partial_{\xi_i} U \rangle$ is skew-symmetric when acting on zero mean sequences.  Evaluate $\mathcal{A}$ as $t \to \infty$ and use Lemma 2.1 in \cite{friesecke:2002} to see that $\mathcal{A}$ is block diagonal with $2 \times 2$ blocks given as in the statement of the lemma.
\end{proof}

{\color{black}
\begin{corollary} ${\cal A}(t)$ is invertible with uniformly bounded inverse.
\end{corollary}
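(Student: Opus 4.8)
The plan is to reduce the corollary to a single nondegeneracy property of the one-soliton, using the explicit structure of $\mathcal{A}$ furnished by the preceding lemma. First I would invoke the lemma to replace the statement about uniform boundedness in $t$ by a statement about a single constant matrix: since $\mathcal{A}(t)$ is independent of $t$, it suffices to show that the fixed, finite-dimensional matrix $\mathcal{A}$ is invertible. A constant invertible matrix on a finite-dimensional space has a bounded inverse, and because there is no $t$-dependence whatsoever the bound is trivially uniform. Thus the entire content of the corollary collapses to $\det \mathcal{A} \neq 0$.

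Next I would exploit the block-diagonal structure. Since $\mathcal{A}$ is block diagonal with the $2\times 2$ blocks given in the lemma, we have $\det \mathcal{A} = \prod_{k=1}^m \det(\mathrm{block}_k)$, so it is enough to show each block is invertible. Computing the determinant of the $k$-th block directly,
\[ \det\begin{pmatrix} 0 & \alpha_{0,k} \\ -\alpha_{0,k} & \alpha_{1,k}\end{pmatrix} = \alpha_{0,k}^2 , \]
so $\mathcal{A}$ is invertible if and only if $\alpha_{0,k}\neq 0$ for every $k$. Note that the diagonal entry $\alpha_{1,k}$ plays no role here; invertibility is controlled entirely by the skew (off-diagonal) part.

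It then remains to show $\alpha_{0,k}\neq 0$. Writing $\alpha_{0,k} = \frac{1}{c_k}\,\frac{d}{dc}H(u_{c_k})\,\frac{dc}{d\kappa}$, I would check the three factors in turn. The factor $c_k^{-1}$ is nonzero since $c_k>1$. The factor $\frac{dc}{d\kappa}$ is read off from the dispersion relation $c(\kappa)=\sinh\kappa/\kappa$: one computes $c'(\kappa) = (\kappa\cosh\kappa - \sinh\kappa)/\kappa^2$, and since $g(\kappa):=\kappa\cosh\kappa-\sinh\kappa$ satisfies $g(0)=0$ and $g'(\kappa)=\kappa\sinh\kappa>0$ for $\kappa>0$, we get $c'(\kappa)>0$.

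The main obstacle, and the only substantive point, is the nonvanishing of $\frac{d}{dc}H(u_c)$, the derivative of the soliton energy with respect to its speed; this is precisely the energy-speed nondegeneracy condition underlying orbital stability theory. I would establish it by substituting the explicit one-soliton profile $Q^1_n(t;\kappa,\gamma)$ into $H=\sum_{n\in\Z}\left(\half P_n^2 + V(R_n)\right)$, evaluating the resulting sum in closed form as a function of $\kappa$ (equivalently of $c$), and verifying that its derivative does not vanish for $\kappa>0$; alternatively, one may cite the strict monotonicity of the conserved energy along the one-soliton family. Combining the three nonvanishing factors gives $\alpha_{0,k}\neq 0$, hence $\det\mathcal{A}\neq 0$, which completes the argument.
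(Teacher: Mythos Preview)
Your proposal is correct and follows essentially the same route as the paper: reduce to $\alpha_{0,k}\ne 0$ via the block-diagonal form, which is exactly what the paper does. The only difference is that where you propose to verify $\frac{d}{dc}H(u_c)\ne 0$ by explicit computation on the one-soliton profile (or by citing the monotonicity of the energy), the paper simply cites \cite{friesecke:1999} for the fact that $\alpha_{0,k}\ne 0$; your more detailed unpacking of the three factors is a fine elaboration of the same argument.
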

\begin{proof} This follows from the fact that $\alpha_{0,k} \ne 0$ for all $k$, a fact that is proven
in \cite{friesecke:1999}.
\end{proof}
}

\begin{proposition}[Tubular Coordinates]
There is a $\delta_* > 0$ such that $(v, \xi)$ coordinates correspond to a unique solution $u$ of the Toda lattice equations in a neighborhood $\|v\| + |\xi-\xi_*| < \delta_*$.
\end{proposition}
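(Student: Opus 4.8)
The plan is to realize the decomposition \eqref{eq:vdef} as an application of the implicit function theorem, with the orthogonality conditions defining $X_m$ playing the role of the equations to be solved for $\xi$. Fix $t$ and the reference parameter $\xi_*$, regard $u$ as a point of $\ell^2_a\times\ell^2_a$, and for $\xi$ near $\xi_*$ and $u$ near $U(t;\xi_*)$ define the map $G=(G_1,\dots,G_{2m})$ into $\R^{2m}$ by
\be G_i(\xi,u):=\langle J^{-1}\partial_{\xi_i}U(t;\xi),\, u-U(t;\xi)\rangle. \label{eq:Gdef} \ee
Each component is well defined and continuous in $(\xi,u)\in\R^{2m}\times(\ell^2_a\times\ell^2_a)$: by Lemma \ref{lem:orthoequiv} it can be written as $\langle p,\partial_{\xi_i}Q\rangle-\langle q,\partial_{\xi_i}P\rangle$ with $u-U(t;\xi)=((S-I)q,p)$, and this is a bounded functional on $\ell^2_a\times\ell^2_a$ by the continuity of the $\ell^2_a$--$\ell^2_{-a}$ dual pairing noted after Lemma \ref{lem:Cfred}. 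By construction $G(\xi,u)=0$ precisely when $v=u-U(t;\xi)$ satisfies the symplectic orthogonality conditions of \eqref{eq:subspace}, i.e. $v\in X_m(t)$; and evidently $G(\xi_*,U(t;\xi_*))=0$.

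Next I would compute the Jacobian $D_\xi G$ at the base point $(\xi_*,U(t;\xi_*))$. Differentiating \eqref{eq:Gdef} in $\xi_j$ produces two terms,
\be \frac{\partial G_i}{\partial\xi_j}=\langle J^{-1}\partial_{\xi_j}\partial_{\xi_i}U,\, u-U(t;\xi)\rangle-\langle J^{-1}\partial_{\xi_i}U,\,\partial_{\xi_j}U\rangle, \label{eq:DG} \ee
and at the base point the first term vanishes because $u-U=0$, leaving $\partial_{\xi_j}G_i=-\mathcal{A}_{ij}$ in the notation of Lemma \ref{lem:tan}. That lemma identifies $\mathcal{A}$ as a time-independent, block-diagonal matrix, and the Corollary following it shows $\mathcal{A}$ is invertible with uniformly bounded inverse. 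Hence $D_\xi G$ is boundedly invertible at the base point, with a bound uniform in $t$.

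With the hypotheses of the implicit function theorem in hand, I would invoke its quantitative (contraction-mapping) form to solve $G(\xi,u)=0$ for $\xi=\xi(u)$ in a neighborhood of the base point, with $\xi(U(t;\xi_*))=\xi_*$; setting $v=u-U(t;\xi(u))$ then gives the unique decomposition with $v\in X_m(t)$, and conversely every $(v,\xi)$ with $v\in X_m(t)$ and $\|v\|+|\xi-\xi_*|$ small arises this way, so that the correspondence $(v,\xi)\leftrightarrow u$ (and hence, via well-posedness of the flow, the solution it generates) is a bijection on the tube. The real content of the statement is that the radius $\delta_*$ can be taken independent of $t$, and this uniformity is the one point requiring care.

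The size of the neighborhood produced by the implicit function theorem depends only on the bound for $\|\mathcal{A}^{-1}\|$ and on uniform bounds, in the weighted norms entering \eqref{eq:Gdef}, for the first and second $\xi$-derivatives of $U(t;\xi)$ for $\xi$ near $\xi_*$ (the latter controlling the Lipschitz modulus of the Jacobian, i.e. the neglected term in \eqref{eq:DG}). The former is exactly the uniform bound of the Corollary to Lemma \ref{lem:tan}. For the latter, Lemma \ref{lem:Cfred} places $\partial_{\gamma_l}Q,\partial_{\gamma_l}P\in\ell^2_a\cap\ell^2_{-a}$ and $\partial_{\kappa_l}Q,\partial_{\kappa_l}P\in\ell^2_{-a}$, and the same inhomogeneous first-order recurrence analysis applied one further time controls the second derivatives in the same spaces; combined with Lemma \ref{lem:resolution}, which shows these quantities converge to their one-soliton counterparts as $t\to\pm\infty$, one obtains norm bounds that are uniform in $t$. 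Establishing this uniform control of the derivative norms is the main obstacle; once it is in place, the implicit function theorem yields a single $\delta_*$ valid for all $t$, completing the proof.
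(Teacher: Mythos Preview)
Your proposal is correct and follows essentially the same approach as the paper: define the map $F_j(\xi,t,u)=\langle J^{-1}\partial_{\xi_j}U(t,\xi),\,u-U(t,\xi)\rangle$, compute $D_\xi F=-\mathcal{A}$ at the base point, invoke invertibility of $\mathcal{A}$ from Lemma~\ref{lem:tan} and its Corollary, and apply the implicit function theorem. The paper's argument for $t$-uniformity of $\delta_*$ is terser than yours---it simply notes that $\mathcal{A}$ is independent of $t$---whereas you correctly observe that one also needs uniform Lipschitz control of $D_\xi G$ (via bounds on second $\xi$-derivatives of $U$); your added discussion of this point is a welcome elaboration rather than a departure.
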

\begin{proof}  {\color{black} We proceed as in \cite{friesecke:1999}.}
Define $F : \R^{2m} \times \R \times \ell^2_a \to \R^{2m}$ by
$F_j(\xi,t,u) = \langle J^{-1}\partial_{\xi_j} U(t,\xi),u-U(t,\xi) \rangle$.
Let $\xi_* \in \R^{2m}$ and $t_* \in \R$ be given.
Observe that $F(\xi_*,t,U(t,\xi_*)) = 0$.  
Compute $ \partial_{\xi_i} F_j(\xi_*,t,U(t,\xi_*)) = -\mathcal{A}_{ij}$ with $\mathcal{A}$ given as in Lemma $\ref{lem:tan}$.
In particular $D_\xi F(\xi_*,t,U(t,\xi_*))$ is invertible.  It now follows from the implicit function theorem that there is a smooth function $(u,t) \mapsto \xi(u,t)$ mapping a neighborhood of $U(0,\xi_*) \times \{t_*\}$ in $\ell^2_a \times \R$ to a neighborhood of $\xi_*$ in $\R^{2m}$ such that $u-U(t,\xi(u,t)) \in (\ell^2_a)^\perp(t)$.  Since the derivative $\partial_{\xi_i}F_j(\xi_*,t,U(t,\xi_*))$ is independent of time, so is the neighborhood of $\xi_*$ on which the function $\xi$ is defined.  Moreover, the map $(u,t) \mapsto (\xi,v)$ given by $\xi = \xi(u,t)$ and $v = u-U(t,\xi(u,t))$ is, for each fixed $t$, an isomorphism from $\ell^2_a$ to $(\ell^2_a)^\perp(t) \times \R^{2m}$.

\end{proof}

\subsection{Modulation Equations}
Take the inner product with $J^{-1} \partial_{\xi_j} U$ in $\eqref{eq:main}$ to obtain
\be \langle (\partial_t - JH''(U))v, J^{-1}\partial_{\xi_j} U \rangle -  \langle JR,J^{-1}\partial_{\xi_j}U \rangle =  \sum_{i=1}^{2m} \langle \partial_{\xi_i} U, J^{-1} \partial_{\xi_j} U \rangle \dot{\xi}_i. \label{eq:mod1} \ee
Differentiate the identity $\langle v(t), J^{-1} \partial_{\xi_j} U(t;\xi(t_0)) \rangle \equiv 0$ with respect to time to obtain 
\[ \langle [\partial_t -JH'(U)]v,J^{-1}\partial_{\xi_j} U \rangle \equiv 0 \]
thus the modulation equation $\eqref{eq:mod1}$ reduces to
\[ \mathcal{A}\dot{\xi} = b \]
where 
$b_{j} = -\langle JR, J^{-1} \partial_{\xi_j}U \rangle = \langle R, \partial_{\xi_j}U \rangle$
and $\mathcal{A}$ is given as in Lemma $\ref{lem:tan}$.  

Thus
\begin{equation} |\dot{\xi}_j| \le  \|\mathcal{A}^{-1}\| \left|\langle R, \partial_{\xi_j} U + \partial_{\xi_{j\pm1}} U \rangle\right| \le K\|R\|_a. \label{eq:xidot} \end{equation}
The last of these inequalities used the Cauchy-Schwartz inequality, plus the estimates on derivatives
of the $m$-soliton established in lemma \ref{lem:Cfred}

\subsection{Stability estimates}
\begin{proposition}
Let $M > 0$ be given.  Then there exists a constant $K$ such that \begin{equation}
\| R \|_a \le  K\left(\|v\|_{\ell^\infty} + |\xi(t) - \xi(t_0)| \right)\|v\|_a \label{eq:Rest}
\end{equation}
holds so long as $\|v\|_a + \|v\|_{\ell^\infty} \le M$.
\end{proposition}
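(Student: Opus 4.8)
The plan is to estimate the two pieces of $R=R_1+R_2$ separately, exploiting the explicit form of the Hamiltonian $H=\sum_n(\half P_n^2 + V(R_n))$ with $V(R)=e^{-R}-1+R$. Since $H$ is quadratic in $P$, so that $H'$ is affine and $H''$ is constant in the momentum direction, both $H'$ and $H''$ act on the $P$-component as the identity; hence the $P$-components of $R_1$ and $R_2$ vanish identically, and it suffices to estimate the scalar ``stretch'' components, which involve only $V'(R)=1-e^{-R}$ and $V''(R)=e^{-R}$. Writing $v=(r,p)$, this reduces the whole proposition to two pointwise-in-$n$ estimates on functions of $r$.

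For $R_1 = H'(U+v)-H'(U)-H''(U)v$, I would write the stretch component as the second-order Taylor remainder
\[
V'(R_n+r_n)-V'(R_n)-V''(R_n)r_n = \left(\int_0^1 (1-s)V'''(R_n+s r_n)\,ds\right) r_n^2 ,
\]
and use $V'''=-e^{-R}$ together with $|r_n|\le\|v\|_{\ell^\infty}\le M$ to bound the integral by $\half e^{M}\sup_{n,t}e^{-R_n}$. This supremum is finite and uniform because the $m$-soliton stretch $R_n=(S-I)Q_n$ tends to $0$ as $n\to\pm\infty$ and is uniformly bounded in $n$ and $t$. Thus $|R_{1,n}|\le C(M)\, r_n^2$, and the weighted bound follows from the elementary interpolation $\sum_n e^{2an}r_n^4\le\|r\|_{\ell^\infty}^2\sum_n e^{2an}r_n^2=\|r\|_{\ell^\infty}^2\|r\|_a^2$, giving $\|R_1\|_a\le C(M)\,\|v\|_{\ell^\infty}\|v\|_a$.

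For $R_2=[H''(U(t;\xi(t_0)))-H''(U(t;\xi(t)))]v$, the stretch component is multiplication of $r_n$ by $g_n(\xi(t_0))-g_n(\xi(t))$ with $g_n(\xi)=e^{-R_n(t;\xi)}$. Writing this as a line integral,
\[
g_n(\xi(t_0))-g_n(\xi(t)) = -\left(\int_0^1\nabla_\xi g_n\big(\xi(t)+\tau(\xi(t_0)-\xi(t))\big)\,d\tau\right)\cdot(\xi(t_0)-\xi(t)),
\]
and using $\nabla_\xi g_n=-e^{-R_n}\nabla_\xi R_n$, the proposition reduces to a uniform (in $n$, $t$, and $\xi$ near $\xi_*$) bound on the multiplier $\sup_n e^{-R_n}|\nabla_\xi R_n|$. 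Once this is in hand, the same factorization as for $R_1$ yields $\|R_2\|_a\le K\,|\xi(t)-\xi(t_0)|\,\|v\|_a$, and adding the two estimates proves \eqref{eq:Rest}.

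The main obstacle is precisely this uniform bound on the parameter-derivatives of the soliton stretch. The phase derivatives $\partial_{\gamma_l}R=(S-I)\partial_{\gamma_l}Q$ are harmless: Lemma \ref{lem:Cfred} places $\partial_{\gamma_l}Q$ in $\ell^2_a\cap\ell^2_{-a}$, so $\partial_{\gamma_l}R$ is a uniformly bounded, spatially localized profile. The delicate direction is $\partial_{\kappa_l}$, since Lemma \ref{lem:Cfred} only guarantees $\partial_{\kappa_l}Q\in\ell^2_{-a}$, and the dependence of the soliton speed on $\kappa_l$ makes these derivatives grow near the (moving) soliton core, so a naive $\ell^\infty$ bound on the $R_2$ multiplier is not available uniformly in $t$. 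I would therefore extract the needed control directly from the weighted estimates and the uniform-in-time inverse bounds furnished by Lemma \ref{lem:Cfred}, together with Lemma \ref{lem:resolution}, which guarantees that the constants coming from the single-soliton asymptotics are attained uniformly as $t\to\pm\infty$. Keeping $K$ independent of $t$ while handling the $\partial_{\kappa_l}$ contribution to the $R_2$ multiplier is the crux of the argument.
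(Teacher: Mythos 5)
Your reduction to the stretch component and your treatment of $R_1$ are complete, correct, and essentially identical to the paper's own argument (Taylor's theorem with integral remainder, followed by the interpolation $\sum_n e^{2an}r_n^4 \le \|r\|_{\ell^\infty}^2\|r\|_a^2$). The gap is in $R_2$, and it is genuine: your final paragraph is a plan, not a proof. You correctly reduce \eqref{eq:Rest} to a bound on $\sup_n e^{-R_n}|\nabla_\xi R_n|$ that must be uniform in $t$, correctly single out the $\partial_{\kappa_l}$ directions as the obstruction, and then leave the obstruction standing. Moreover, the tools you propose cannot remove it. The conclusions of Lemma \ref{lem:Cfred} are $\ell^2_{-a}$ statements, and these give no useful pointwise control at the soliton cores: for $t>0$ the $l$-th core sits near $n\approx c_l t$ with $c_l>1$, where the only pointwise information in $\|x\|_{\ell^2_{-a}}\le K$ is $|x_n|\le Ke^{an}$, i.e.\ a bound of size $e^{ac_l t}$ at exactly the sites where the multiplier $V''\bigl(R_n(t;\xi(t_0))\bigr)-V''\bigl(R_n(t;\xi(t))\bigr)$ is largest. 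Lemma \ref{lem:resolution} does not help either, since the single-soliton profiles have the same feature.

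The obstruction is real for the parametrization the paper actually uses. Since the phases are $\eta_l+\gamma_l$ with $\eta_l(n,t)=\kappa_l n-\sinh(\kappa_l)t$, one has $\partial_{\kappa_l}(\eta_l+\gamma_l)=n-\cosh(\kappa_l)t$, which on the $l$-th core ($n\approx t\sinh(\kappa_l)/\kappa_l$) equals $\bigl(\sinh(\kappa_l)/\kappa_l-\cosh(\kappa_l)\bigr)t+O(1)$ and grows linearly in $|t|$ because $\tanh\kappa<\kappa$. Hence $\sup_n e^{-R_n}|\partial_{\kappa_l}R_n|\sim Ct$, and the map $\xi\mapsto H''(U(t;\xi))$ has no $t$-uniform Lipschitz constant; indeed, testing with a unit $\ell^2_a$ spike $v$ placed at a core, at a time $t\sim 1/|\kappa_l(t)-\kappa_l(t_0)|$, makes the left side of \eqref{eq:Rest} of order one while the right side is of order $K\bigl(e^{-ac_lt}+1/t\bigr)$, so no mean-value argument in these coordinates can close. (You have in fact put your finger on a point that the paper's own one-line proof, ``$H''$ is locally Lipschitz,'' passes over in silence.) The standard repair, as in \cite{pego:1994,friesecke:2002}, is to reparametrize the soliton manifold so that parameter derivatives are uniformly bounded and exponentially localized: the secular part of $\partial_{\kappa_l}U$ is a multiple of order $t$ of $\partial_{\gamma_l}U$, so one works with the corrected direction $\partial_{\kappa_l}+\bigl(\cosh(\kappa_l)-\sinh(\kappa_l)/\kappa_l\bigr)t\,\partial_{\gamma_l}$ (equivalently, uses speed and position-type phase as coordinates), for which $(n-c_lt)\partial_{\gamma_l}U$ is uniformly bounded and localized. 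In such coordinates the mean-value argument you set up closes and yields \eqref{eq:Rest} with $K$ uniform in $t$; without some device of this kind your $R_2$ estimate, and with it the proposition, remains unproven.
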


\begin{proof}
We estimate $\| R_1 \|_a  \le  K\|v\|_a\|v\|_{\ell^\infty}$ using Taylor's theorem with remainder.  We estimate $\| R_2 \|_a \le  K\|v\|_a(|\xi(t) - \xi(t_0)|)$ using the fact that $H''$ is locally Lipschitz.
\end{proof}

In order to proceed we must check that $\|v\|_{\ell^\infty}$ (which is controlled by $\|v\|$) remains well-behaved for long times.  As a preliminary step we prove
\begin{lemma}
\label{eq:dUdt}
\[ \langle H'(U(t;\kappa(t_0),\gamma(t_0))),v(t) \rangle \equiv 0. \]
\end{lemma}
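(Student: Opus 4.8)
The plan is to leverage the fact that $\tilde U(t) := U(t;\kappa(t_0),\gamma(t_0))$, the $m$-soliton with its parameters frozen at their values at $t_0$, is itself a genuine solution of the Toda system $\dot U = JH'(U)$. Consequently $\partial_t \tilde U = JH'(\tilde U)$, and so $H'(\tilde U) = J^{-1}\partial_t \tilde U$. The entire lemma will then reduce to the observation that $\partial_t \tilde U$ lies in the span of the phase-translation neutral modes $\partial_{\gamma_i}\tilde U$, to which $v(t)$ is symplectically orthogonal by the very construction of the tubular $(v,\xi)$ coordinates (the relation $\langle v(t), J^{-1}\partial_{\xi_j}U(t;\xi(t_0))\rangle \equiv 0$ used in the modulation equations).

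The key structural input I would establish first is that the $m$-soliton depends on $t$ and the phases $\gamma_1,\dots,\gamma_m$ only through the combinations $\gamma_i - t\sinh\kappa_i$; equivalently, a shift in time is realized exactly as a shift of the phases,
\[ U(t+s;\kappa,\gamma) = U\bigl(t;\kappa,(\gamma_i - s\sinh\kappa_i)_{i=1}^m\bigr), \qquad s \in \R. \]
For a single soliton this is immediate, since $Q^1_n(t;\kappa,\gamma)$ depends on its arguments only through $\kappa n - t\sinh\kappa + \gamma$. For general $m$ I would argue by induction on the B\"acklund hierarchy: the transformation $\eqref{eq:BT}$ together with $\dot Q = P$ and $\dot Q' = P'$ contains no explicit $t$, hence is invariant under $t \mapsto t+s$, while the only freedom in passing from the $(m-1)$-soliton to the $m$-soliton is the constant of integration $\gamma_m$, which absorbs the time shift of the newly added soliton as $\gamma_m \mapsto \gamma_m - s\sinh\kappa_m$. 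Differentiating the displayed identity in $s$ at $s=0$ then yields
\[ \partial_t \tilde U = -\sum_{i=1}^m \sinh\kappa_i(t_0)\,\partial_{\gamma_i}\tilde U. \]

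Finally I would combine these two facts. Applying $J^{-1}$ gives $H'(\tilde U) = -\sum_{i=1}^m \sinh\kappa_i(t_0)\,J^{-1}\partial_{\gamma_i}\tilde U$; in particular $H'(\tilde U) \in \ell^2_{-a}$ by Lemma \ref{lem:Cfred}, so the pairing against $v(t)\in\ell^2_a$ is a well-defined dual pairing. Using the symmetry of the $\ell^2$ inner product,
\[ \langle H'(\tilde U), v(t)\rangle = -\sum_{i=1}^m \sinh\kappa_i(t_0)\,\langle v(t), J^{-1}\partial_{\gamma_i}\tilde U\rangle = 0, \]
the last equality being exactly the symplectic orthogonality of $v(t)$ to the frozen neutral modes. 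The main obstacle is the induction in the second paragraph: one must verify that the constant of integration in the B\"acklund construction can be chosen so that the time shift is matched by a phase shift at every finite time, not merely asymptotically — this rests on the uniqueness of the $m$-soliton produced from a given $(m-1)$-soliton once $\gamma_m$ is fixed. Granting that parametric dependence, the remainder of the argument is the short computation above.
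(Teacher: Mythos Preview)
Your proposal is correct and follows essentially the same approach as the paper: both reduce the claim to the profile identity $\partial_t U = -c\sum_i \sinh\kappa_i\,\partial_{\gamma_i}U$ (the paper states it with $c=2$, you with $c=1$; the constant is immaterial here) combined with the symplectic orthogonality $\langle v, J^{-1}\partial_{\gamma_i}U\rangle \equiv 0$ and the relation $H'(U)=J^{-1}\partial_t U$. The only difference is in how you justify the profile identity --- you argue via time-translation invariance and B\"acklund induction, whereas the paper computes directly from the determinant formula $Q^m=(I-S^{-1})\log\det(I+C)$ in the appendix; in fact your time-shift observation is immediate from that explicit formula (since $C_{ij}$ depends on $t,\gamma$ only through $\eta_i+\gamma_i$), so the induction is not needed.
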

\begin{proof}
The proof follows from the following identity satisfied by the $m$-soliton profile:
\begin{lemma}\label{lem:profile_identity}
If $U$ is an $m$-soliton solution of the Toda equations, then
\begin{equation}
\partial_t U = -\sum_i 2 \sinh \kappa_i \partial_{\gamma_i} U \ .
\end{equation}
\end{lemma}
This lemma is proved in the appendix - assuming that it holds, 
 and recalling the orthogonality condition $\langle J^{-1} \partial_{\gamma_i} U, v \rangle \equiv 0$ it follows that $\langle J^{-1} \partial_t U, v \rangle \equiv 0$, thus $\langle H'(U), v \rangle \equiv 0$, as desired.
 
 \end{proof}

We now derive our first estimate on $v(t)$, the perturbation of the $m$-soliton.

\begin{proposition}
There is a constant $K$ such that if $v(t)$ is the function defined
in \eqref{eq:vdef}, then 

\be  \|v(t)\|^2  \le  K\left(\|v(t_0)\|^2 + |\xi(t) - \xi(t_0)| \right) 
\label{eq:l2}
\ee
holds for $t \ge t_0$ so long as $\|v(t)\| \le \delta_*$.
\end{proposition}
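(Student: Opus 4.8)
The plan is to exploit conservation of the Hamiltonian. Write $\tilde U(t) := U(t;\xi(t_0))$ for the $m$-soliton with parameters frozen at their values at time $t_0$. Since both $u(t)$ and $\tilde U(t)$ solve the Toda equations \eqref{eq:Hamiltonian}, the quantity
\[ \mathcal{E}(t) := H(u(t)) - H(\tilde U(t)) \]
is independent of $t$, so $\mathcal{E}(t) = \mathcal{E}(t_0)$. I will bound $\mathcal{E}(t_0)$ from above by $K\|v(t_0)\|^2$ and bound $\mathcal{E}(t)$ from below by a positive multiple of $\|v(t)\|^2$ minus an error that is linear in the parameter drift $|\xi(t)-\xi(t_0)|$; comparing the two yields \eqref{eq:l2}.

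The decisive structural fact is that, in the $(R,P)$ variables, the energy density is convex: since $V(R)=e^{-R}-1+R$ has $V''(R)=e^{-R}>0$, the Hessian acts diagonally,
\[ \langle H''(U)(r,p),(r,p)\rangle = \sum_{n\in\Z}\left(p_n^2 + e^{-R_n}\,r_n^2\right). \]
Because the $m$-soliton profile $R_n(t)$ is bounded uniformly in $n$ and $t$ (this uses Lemma \ref{lem:resolution} and boundedness of the profile through interactions), there is a $c_0>0$ with $e^{-R_n(t)}\ge c_0$ for all $n,t$, and the same holds with $R$ replaced by $R$ plus a perturbation that is small in $\ell^\infty$. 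Thus $H''$ evaluated anywhere along a short segment through $\tilde U(t)$ is uniformly positive definite on all of $\ell^2$. In particular no spectral or coercivity hypothesis is needed: the obstruction that normally dominates an orbital-stability argument is absent because the Toda potential is convex, and the orthogonality conditions enter only to remove the first-order term.

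To make this precise I would apply the second-order Taylor formula with integral remainder at the base point $\tilde U(t)$, writing $w(t) := u(t)-\tilde U(t) = v(t)+\Delta(t)$ with $\Delta(t):=U(t;\xi(t))-U(t;\xi(t_0))$:
\[ \mathcal{E}(t) = \langle H'(\tilde U(t)),w(t)\rangle + \int_0^1 (1-s)\,\langle H''(\tilde U(t)+sw(t))\,w(t),w(t)\rangle\,ds. \]
By Lemma \ref{eq:dUdt} the component $\langle H'(\tilde U(t)),v(t)\rangle$ vanishes, leaving only $\langle H'(\tilde U(t)),\Delta(t)\rangle$ in the linear term. Since $H'(U)=(1-e^{-R},P)$ decays exponentially, $\|H'(\tilde U(t))\|$ is bounded uniformly in $t$, while $\|\Delta(t)\|\le K|\xi(t)-\xi(t_0)|$ follows from the mean value theorem together with uniform $\ell^2$ bounds on the $\partial_{\xi_i}U$ (here one notes that although $\partial_{\kappa_i}Q$ does not decay at $+\infty$, the relevant quantity $\partial_{\kappa_i}R=(S-I)\partial_{\kappa_i}Q$ does, so $\partial_{\xi_i}U\in\ell^2$; cf.\ Lemma \ref{lem:Cfred}). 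Hence $|\langle H'(\tilde U),\Delta\rangle|\le K|\xi(t)-\xi(t_0)|$. Using positivity of the integrand, $\int_0^1(1-s)\langle H''w,w\rangle\,ds\ge \tfrac12 c_0\|w\|^2$, and evaluating the conserved quantity at $t_0$ (where $w(t_0)=v(t_0)$ and the linear term again vanishes, giving $\mathcal{E}(t_0)\le K\|v(t_0)\|^2$ from the upper bound on the same integrand), I obtain $\tfrac12 c_0\|w(t)\|^2\le K\|v(t_0)\|^2 + K|\xi(t)-\xi(t_0)|$. Finally $\|v\|^2\le 2\|w\|^2+2\|\Delta\|^2$ and $\|\Delta\|^2\le K|\xi(t)-\xi(t_0)|^2\le K|\xi(t)-\xi(t_0)|$ in the tubular neighborhood give \eqref{eq:l2}.

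The only genuine obstacle is the uniformity of the positivity constant $c_0$: one must know that the interacting $m$-soliton keeps $R_n(t)$ bounded above for all $t$, including through collisions, and that the $\ell^\infty$-size of $w$ stays small so that $H''(\tilde U+sw)$ remains positive. The first point rests on the resolution result Lemma \ref{lem:resolution} and boundedness of the profile during interaction; the second is guaranteed by the standing hypothesis $\|v(t)\|\le\delta_*$ together with smallness of the drift in the tubular neighborhood. Everything else is routine bookkeeping with Taylor's theorem and the already-established decay of the neutral modes.
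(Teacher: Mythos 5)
Your strategy---conservation of $H$ along both $u(t)$ and the frozen soliton $\tilde U(t)=U(t;\xi(t_0))$, Taylor expansion with the convexity of the Toda potential supplying two-sided coercivity, and Lemma \ref{eq:dUdt} killing the linear term in $v$---is the same as the paper's. The difference is the base point of the expansion, and that is where a genuine gap appears. The paper expands about the \emph{modulated} soliton $U(t;\xi(t))$, so that $v$ itself sits inside the coercive quadratic form; you expand about the \emph{frozen} soliton, so the quantity the quadratic form controls is $w=v+\Delta$ with $\Delta(t)=U(t;\xi(t))-U(t;\xi(t_0))$, and you must then prove $\|\Delta(t)\|_{\ell^2}\le K\,|\xi(t)-\xi(t_0)|$ with $K$ \emph{uniform in $t$}. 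That estimate is false. Changing $\kappa_i$ changes the velocity of the $i$-th soliton, so in the paper's parametrization one finds for the one-soliton
\[
\partial_\kappa Q_n(t)\;=\;W(n-ct)\;+\;t\left(\tfrac{\sinh\kappa}{\kappa}-\cosh\kappa\right)\partial_\gamma Q_n(t),
\]
with $W$ a fixed bounded profile; since $\sinh\kappa/\kappa<\cosh\kappa$, the second term is genuinely present and $\|\partial_{\kappa_i}U(t;\xi)\|_{\ell^2}$ grows linearly in $|t|$. (Lemma \ref{lem:Cfred}, which you cite, gives spatial decay---membership in $\ell^2_{-a}$---at each fixed $t$; it does not give unweighted $\ell^2$ bounds uniform in $t$.) Concretely, once $|t|\ge |\kappa(t)-\kappa(t_0)|^{-1}$ the constituent solitons of $U(t;\xi(t))$ and $U(t;\xi(t_0))$ are displaced by an $O(1)$ distance, so $\|\Delta(t)\|_{\ell^2}=O(1)$ no matter how small the parameter drift is; and in the bootstrap regime the drift $|\kappa(t)-\kappa(t_0)|$ is small but does not decay to zero, so this regime is actually reached.

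This breaks your argument in two places. The bound $|\langle H'(\tilde U(t)),\Delta(t)\rangle|\le K|\xi(t)-\xi(t_0)|$ might conceivably be repaired by exploiting the exponential localization of $H'(\tilde U(t))$ against the structure of $\Delta$, but the final recovery step, $\|v\|^2\le 2\|w\|^2+2\|\Delta\|^2$ combined with $\|\Delta\|^2\le K|\xi(t)-\xi(t_0)|$, cannot be: it is a pure norm statement with no pairing to save it, and it fails precisely for the large times that the subsequent bootstrap lemma and the proof of Theorem \ref{thm:stab} require (constants independent of $t$ on $[t_0,\infty)$). The paper's telescoping is designed to avoid ever estimating $\|\Delta\|_{\ell^2}$: the parameter drift enters only through $H(U(t;\xi(t_0)))-H(U(t;\xi(t)))$, which is \emph{independent of $t$} because $H$ is conserved along each exact solution $U(\cdot\,;\xi)$---so its Lipschitz constant in $\xi$ is automatically $t$-uniform---and through $\langle H'(U(t;\xi(t_0)))-H'(U(t;\xi(t))),v(t)\rangle$, which carries the small factor $\|v(t)\|$. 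If you re-center your Taylor expansion at $U(t;\xi(t))$ and treat the $\xi$-dependence through the conserved quantity $h(\xi):=H(U(\,\cdot\,;\xi))$ in this way, your argument becomes the paper's proof.
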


\begin{proof}
Let $u(t) := U(t;\kappa(t),\gamma(t)) + v(t)$.  From the form of the Hamiltonian function
we know
\[ K_-\|v\|^2 \le H(u) - H(U) - \langle H'(U), v \rangle \le K_+\|v\|^2. \] 
Thus
\[ \ba{lllr}
K_-\|v(t)\|^2 
& \le & 
H(u(t)) - H(U(t;\xi(t))) - \langle H'(U(t;\xi(t))), v(t) \rangle \\ \\
& = &  H(u(t)) - H(u(t_0)) & (i) \\ \\ 
& & - H(U(t;\xi(t))) + H(U(t;\xi(t_0))) & (ii) \\ \\ 
& & - H(U(t; \xi(t_0))) + H(U(t_0;\xi(t_0))) & (iii) \\ \\
& & + \langle H'(U(t;\xi(t_0))),v(t)\rangle - \langle H'(U(t;\xi(t))),v(t) \rangle & (iv) \\ \\
& & + \langle H'(U(t_0;\xi(t_0))),v(t_0) \rangle - \langle H'(U(t;\xi(t_0))),v(t)\rangle & (v) \\ \\
& & + H(u(t_0)) - H(U(t_0;\xi(t_0))) - \langle H'(U(t_0;\xi(t_0))),v(t_0) \rangle & (vi) 
\ea
\]
We estimate
$(i) = 0$ by conservation of the Hamiltonian,
$(ii) \le C|\xi(t)-\xi(t_0)|$ because $H$ and $U$ are locally Lipschitz in their arguments.
$(iii) = 0$ by conservation of the Hamiltonian
$(iv) \le K|\xi(t)-\xi(t_0)|\|v(t)\| \le K|\xi(t)-\xi(t_0)|$,
$(v) = 0$ by Lemma \ref{eq:dUdt},
$(vi) \le K\|v(t_0)\|^2$.  
Summing these estimates yields $\eqref{eq:l2}$.
\end{proof}

\begin{lemma}
Let $\delta_0$, $\delta_1$ and $\delta_2$ be positive numbers.  Suppose that the following estimates hold for $t\in[t_0,t_1]$:
\begin{equation}
|\xi(t) - \xi(t_0)| < \delta_0; \qquad \|v(t)\|_a < \delta_1 e^{-\beta' (t-t_0)}; \qquad \|v(t)\|< \delta_2.
\label{eq:stab1}
\end{equation}

Then in fact, the estimates $|\xi(t) - \xi(t_0)| < \frac{K(\delta_2 + \delta_0)\delta_1}{\beta}$, $\|v(t)\| \le K(\|v(t_0)\| + \sqrt{\delta_0})$, and $\|v(t)\|_a \le \delta_1 e^{-\beta'(t-t_0)}\left(\frac{K(\delta_2 + \delta_0)}{\beta - \beta'}  + \frac{K}{\delta_1}e^{-(\beta-\beta')(t-t_0)}\|v(t_0)\|_a\right)$ hold for all $t \in [t_0,t_1]$.
\end{lemma}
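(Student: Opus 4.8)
The three improved estimates in \eqref{eq:stab1} are proved separately and together constitute the inductive step of a standard bootstrap (continuous-induction) argument: each improved bound is obtained by feeding the a priori bounds \eqref{eq:stab1} into one of the estimates already established in this section. The plan is to dispatch the modulation bound and the unweighted $\ell^2$ bound first, since these are immediate, and then to concentrate the effort on the weighted bound, which is where Theorem \ref{thm:main} enters.

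For the modulation parameters I would start from \eqref{eq:xidot}, namely $|\dot\xi_j|\le K\|R\|_a$, and insert the remainder estimate \eqref{eq:Rest}. Using $\|v\|_{\ell^\infty}\le\|v\|<\delta_2$ and $|\xi(t)-\xi(t_0)|<\delta_0$ together with $\|v(s)\|_a<\delta_1 e^{-\beta'(s-t_0)}$ from \eqref{eq:stab1}, this gives $|\dot\xi_j(s)|\le K(\delta_2+\delta_0)\delta_1 e^{-\beta'(s-t_0)}$; integrating in $s$ from $t_0$ to $t$ and bounding the exponential integral by a fixed multiple of $\beta^{-1}$ yields the stated bound on $|\xi(t)-\xi(t_0)|$. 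For the unweighted norm I would simply invoke the energy estimate \eqref{eq:l2}, $\|v(t)\|^2\le K(\|v(t_0)\|^2+|\xi(t)-\xi(t_0)|)$, and apply $|\xi(t)-\xi(t_0)|<\delta_0$ with $\sqrt{a+b}\le\sqrt a+\sqrt b$ to conclude $\|v(t)\|\le K(\|v(t_0)\|+\sqrt{\delta_0})$.

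The weighted estimate is the heart of the matter. I would regard \eqref{eq:main} as an inhomogeneous linear equation driven by $JH''(U(t;\xi(t_0)))$, the linearization about the $m$-soliton with frozen parameters $\xi(t_0)$, whose evolution operator $\Phi(t,s)$ obeys the decay estimate of Theorem \ref{thm:main}, and apply the variation-of-constants formula
\[ v(t) = \Phi(t,t_0)v(t_0) + \int_{t_0}^t \Phi(t,s)F(s)\,ds, \qquad F(s) = JR(s) - \sum_i \partial_{\xi_i}U\,\dot\xi_i. \]
The crucial point is that the modulation equations $\mathcal{A}\dot\xi=b$ are arranged precisely so that $F(s)$ is symplectically orthogonal to every frozen neutral mode $\partial_{\xi_j}U(s;\xi(t_0))$; indeed $\langle F,J^{-1}\partial_{\xi_j}U\rangle = \langle[\partial_t-JH'']v,J^{-1}\partial_{\xi_j}U\rangle=0$ by differentiating the orthogonality identity defining $v$, so $F(s)\in X_m(s)$, and since $v(t_0)\in X_m(t_0)$ as well, Theorem \ref{thm:main} applies to both contributions. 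Estimating the forcing through \eqref{eq:Rest} and \eqref{eq:xidot} gives $\|F(s)\|_a\le K(\delta_2+\delta_0)\delta_1 e^{-\beta'(s-t_0)}$, whence
\[ \|v(t)\|_a \le Ke^{-\beta(t-t_0)}\|v(t_0)\|_a + K(\delta_2+\delta_0)\delta_1\int_{t_0}^t e^{-\beta(t-s)}e^{-\beta'(s-t_0)}\,ds. \]
Because $\beta>\beta'$ the convolution is bounded by $(\beta-\beta')^{-1}e^{-\beta'(t-t_0)}$, and factoring out $\delta_1 e^{-\beta'(t-t_0)}$ reproduces the claimed weighted bound verbatim.

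The main obstacle is not the convolution, which is routine, but two structural points in the weighted estimate. First, one must verify $F(s)\in X_m(s)$ so that Theorem \ref{thm:main} is applicable; this follows from the modulation construction, but it requires that the orthogonality be with respect to the frozen modes $\partial_{\xi_j}U(s;\xi(t_0))$ defining $\Phi(t,s)$, and that $v(s)$ and $F(s)$ remain in $X_m(s)$ for all $s$, which is guaranteed by Proposition \ref{prop:LTorth}. Second, the forcing bound requires the neutral-mode derivatives $\partial_{\xi_i}U$ in $F$ to have finite, $t$-uniform $\ell^2_a$ norm; this is immediate for the $\gamma_i$-directions by Lemma \ref{lem:Cfred}, but for the $\kappa_i$-directions it relies on the observation that although $\partial_{\kappa_i}Q$ does not decay as $n\to+\infty$, its difference $\partial_{\kappa_i}R=(S-I)\partial_{\kappa_i}Q$ does, so that $\partial_{\kappa_i}U$ does lie in $\ell^2_a$ once one passes to the $(R,P)$ variables.
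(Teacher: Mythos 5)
Your proposal is correct and follows essentially the same route as the paper: substitute the a priori bounds into \eqref{eq:l2} for the unweighted norm, into \eqref{eq:Rest} and \eqref{eq:xidot} and integrate for the modulation parameters, and use the variation-of-constants formula with the decay estimate of Theorem \ref{thm:main} plus the exponential convolution bound $\int_{t_0}^t e^{-\beta(t-s)}e^{-\beta'(s-t_0)}\,ds \le (\beta-\beta')^{-1}e^{-\beta'(t-t_0)}$ for the weighted norm. The structural points you verify explicitly --- that the forcing $F(s)$ lies in $X_m(s)$ because the modulation equations enforce symplectic orthogonality to the frozen neutral modes, and that $\partial_{\kappa_i}U$ has uniformly bounded $\ell^2_a$ norm in the $(R,P)$ variables --- are left implicit in the paper's proof, and spelling them out is a sound addition rather than a deviation.
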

\begin{proof}
Substitute $\eqref{eq:stab1}$ into $\eqref{eq:l2}$ to obtain $\|v(t)\|^2 \le K(\|v(t_0)\|^2 + \delta_0)$.  Substitute $\eqref{eq:stab1}$ into $\eqref{eq:Rest}$ and then into $\eqref{eq:xidot}$ to obtain $|\dot{\xi}| \le K\delta_1(\delta_2 + \delta_0)e^{-\beta'(t-t_0)}$.  Integrating yields $|\xi(t)-\xi(t_0)| \le \frac{K\delta_1(\delta_2 + \delta_0)}{\beta'}$.  To estimate $\|v(t)\|_a$ again substitute $\eqref{eq:stab1}$ into $\eqref{eq:Rest}$, then rewrite the evolution equation $\eqref{eq:main}$ using the variation of constants formula and make use of the linear decay estimates in Theorem $\ref{thm:main}$ to obtain
\[ \ba{rl} \|v(t)\|_a & \le Ke^{-\beta(t-t_0)}\|v(t_0)\| + \int_0^t e^{-\beta(t-s)}K\delta_1(\delta_2+\delta_0)e^{-\beta's} ds \\
	& = \delta_1e^{-\beta'(t-t_0)}\left[\frac{K(\delta_0 + \delta_2)}{\beta - \beta'} + \frac{K}{\delta_1} e^{-(\beta-\beta')(t-t_0)}\|v(t_0)\|_a\right] \ea \]
as desired.
\end{proof}

\begin{proof}[Proof of Theorem 2]
Let $\delta_2 > 0$ be given.  Let $\delta_0 < \frac{\delta_2^2}{2K}$ and let $\delta_1 < \frac{\delta_0 \beta'}{K(\delta_2 + \delta_0)}$.  Finally, restrict attention to $v(t_0)$ so small so that $\|v(t_0)\| < \frac{\delta_2}{\sqrt{2K}}$ and $\|v(t_0)\|_a < \frac{\delta_1}{2K}$.  Apply Lemma 4.5 to conclude that we can take $t_1 = \infty$ without loss of generality.  To establish the fact that $\kappa_i$ and $\gamma_i$ converge to limits at $\pm \infty$ note that from what we have already proven $\|v(t)\|_a$, and hence $\|R\|_a$ decay exponentially fast.  It now follows from $\eqref{eq:xidot}$ that $\dot{\xi}_j$ decays exponentially as well and hence that $\xi_j(t)$ has a limit as $t \to \infty$.
\end{proof}

{\bf Acknowledgements:} This work was funded in part by the National Science Foundation under grants DMS-0603589 and DMS-0908093.  The first author was supported in part by grant DMS-0602204
EMSW21-RTG, BIODYNAMICS AT BOSTON  UNIVERSITY.  The second and third authors also wish to thank R. Pego and T. Mizumachi
for useful discussions concerning B\"acklund transformations and stability.

\section{Appendix: Form of the m-soliton solutions of the Toda model}\label{sec:TodaFacts}

In the present appendix we prove various facts about the m-soliton solutions of the Toda model.
While these facts will probably not be surprising to experts, we were unable to find their proofs in
the literature and thus collect them here for future reference.

Proof of Lemma \ref{lem:resolution} (Resolution of $m$-soliton solutions into one-solitions).  
\begin{proof}
While 
the asymptotic resolution  into one-solitons is one of the defining characteristics of an $m$-soliton
solution this lemma shows that the decomposition takes place exponentially rapidly in the space
$\ell^1$ (and hence in all other $\ell^p$ spaces.)  We begin by recalling the explicit expression for
the $m$-soliton (\cite{toda:1989}, Section 3.6)  namely
\begin{equation}\label{eq:Qm_formula}  
Q^m = Q^m(\gamma_1,\cdots \gamma_m,\kappa_1,\cdots \kappa_m) = (I - S^{-1})\log d
\end{equation}
where  $d = \det(I + C)$ and 
\begin{equation}\label{eq:C_formula}
C_{ij}=C_{ij}(n,t)  = \frac{e^{-(\eta_i(n,t) + \gamma_i + \eta_j(n,t) + \gamma_j)}}{1 - e^{-(\kappa_i + \kappa_j)}}\ .
\end{equation}
with $\eta_j(n,t)  = \kappa_j n - \sinh(\kappa_j) t$.
Note that this notation also encompasses the formula one-soliton
solutions which can be 
written as $Q^1(\gamma,\kappa) = (I - S^{-1}) \log (1 + \frac{e^{-2(\eta + \gamma)}}{1 - e^{-2\kappa}})$,
with $\eta(n,t) = \kappa n - \sinh(\kappa) t$.

We begin by noting that there is a simple, explicit formula for the phase shifts,
$\zeta_j^\pm$, undergone by the constituent
solitons in the $m$-solition as they interact with one another.  
Let $\alpha_{ij} = \frac{1}{1 - e^{-(\kappa_i + \kappa_j)}}$, let $\alpha^{k,+}$ denote the $(m-k+1) \times (m-k +1)$ matrix with entries $\alpha_{ij}$ for $k \le i,j \le m$, and let $\alpha^{k,-}$ denote the $k \times k$ matrix with entries $\alpha_{ij}$ for $1 \le i,j \le k$.  Since $\kappa_i > 0$ for each $i$, it follows that each $\alpha_{ij}$ is positive, hence that each matrix $\alpha^{k,\pm}$ is positive-definite, hence that $\det \alpha^{k,\pm}$ is positive.  

Thus we may define $\zeta_j^+ = \frac{1}{2} \log \frac{\det \alpha^{j+1,+}}{\det \alpha^{j,+}}$ for $1 \le j < m$ with $\zeta_m^+ = -\frac{1}{2}\log \det \alpha^{m,+}$ and $\zeta_j^- = \frac{1}{2}\log \frac{\det \alpha^{j-1,-}}{\det \alpha^{j,-}}$ for $1 < j \le m$ with $\zeta_1^- = -\frac{1}{2}\log \det \alpha^{1,-}$.

We now proceed to the proof of $\eqref{eq:res}$.  We write $Q^m = Q^m(\gamma_1,\cdots \gamma_m,\kappa_1,\cdots \kappa_m) = (I - S^{-1})\log d$ where $d = \det(I + C)$ and $C_{ij} = \frac{e^{-(\eta_i + \gamma_i + \eta_j + \gamma_j)}}{1 - e^{-(\kappa_i + \kappa_j)}}$.  Similarly we write $Q^1(\gamma,\kappa) = (I - S^{-1}) \log (1 + \frac{e^{-2(\eta + \gamma)}}{1 - e^{-2\kappa}})$

\[ \ba{lll} 
Q^m - \sum_{i=1}^m Q^1(\gamma_i + \zeta_i^\pm,\kappa_i) & = & \displaystyle{(I - S^{-1})\log\left(\sum_{\sigma \in S^m} \mathrm{sgn}(\sigma) \Pi_{j=1}^n \frac{\delta_{j,\sigma(j)} + C_{j,\sigma(j)}}{1 + e^{-2\zeta^\pm_j}C_{jj}}\right)} \\ \\ 
& = & \displaystyle{(I - S^{-1})\log\left( 1 +  \Delta^\pm \right)}
\ea
 \]
where \[\Delta^\pm = \frac{\left[ \sum_{\sigma \in S^m} \mathrm{sgn}(\sigma) \Pi_{j=1}^m \delta_{j,\sigma(j)} + C_{j,\sigma(j)}\right] - \left[ \Pi_{j=1}^m (1 + e^{-2\zeta_j^\pm} C_{jj}) \right]}{\Pi_{j=1}^m (1 + e^{-2\zeta^\pm_j} C_{j,j})}.\]  In light of the fact that $(I - S^{-1})\log(1 + x) = \log(1 + \frac{x-S^{-1}x}{1 + S^{-1}x})$ it suffices to show that $\| \Delta^\pm \|_{\ell^1} \to 0$ exponentially fast as $t \to \pm \infty$.  Let $\xi_j = e^{-\eta_j-\gamma_j}$ so that $C_{ij} = \xi_i\xi_j \alpha_{ij}$.  Then we can write
\[
\ba{lll} 
\Delta^\pm & = & \displaystyle{\frac{\sum_{J \subset \{1,\cdots m\}} (\Pi_{k \in J} \xi_k^2) \left[\left(\sum_{\{ \sigma \in S^m \; | \; J^C \subset \mathrm{Fix}\sigma\}} \mathrm{sgn}(\sigma) \Pi_{j \in J}\alpha_{j,\sigma(j)}\right) -\Pi_{k \in J} e^{-2\zeta^\pm_k}\right]} {\Pi_{j=1}^m (1 + e^{-2\zeta^\pm_j} \alpha_{j,j} \xi_j^2 )}} \\ \\ 
& = & 
\displaystyle{\frac{\sum_{J \subset \{1,\cdots m\}} (\Pi_{k \in J} \xi_k^2) \left[\det \alpha^J - e^{-2\sum_{k \in J}\zeta^\pm_k}\right]} {\sum_{J \subset \{1,\cdots m\}} \det \alpha^J e^{-2\sum_{k \in J}\zeta^\pm_k} \Pi_{k \in J} \xi_k^2 }}
\ea
\]
where the sum runs over all subsets of the set of
integers $\{ 1, 2, \dots , m\}$, and  $\alpha^J$ is the $|J| \times |J|$ matrix with entries $\alpha_{ij}$ for $i,j \in J$.

Note that for any fixed $t > 0$ and $n$, the polynomials $\prod_{k \in J} \xi_k^2 = \prod_{k \in J} \xi_k(n,t)^2$ can
be ordered in size and the largest will correspond to a subset $J$ in which $\xi_k > 1$ if
and only if $k \in J$.  This follows from the observation that for each $n$, if $\xi_j(n,t) > 1$, then
$\xi_k(n,t) > 1$ for $k = j+1, \dots, m$ and if $\xi_j(n,t) < 1$, then $\xi_k(n,t) < 1$ for $k=1,\dots , j-1$.
If $t< 0$, a similar argument shows that the dominant polynomial corresponds to 
$J = \{1,2,\cdots k\}$ for some $k$ (as well as the empty set). 
For the remainder of the proof, we concentrate on the case $t>0$ -- the case when $t \to -\infty$
is handled in a similar fashion.

The key observation is that in the numerator of the expression for $\Delta^+(n,t)$, the definition
the asymptotic phase shifts, $\zeta^{\pm}$,  insures that the coefficient of the largest polynomial is zero.
This is sufficient to prove the lemma.

In more detail, set $n_j(t) = \lfloor \frac{\sinh(\kappa_j) t)}{\kappa_j} \rfloor$, where $\lfloor x \rfloor$ denotes
the integer part of $x$.  Also, set $\lambda(t) = \min_{j=1,\dots ,m-1} (n_{j+1}(t) - n_j(t))$.  Note that there
exists $\lambda_0 > 0$ such that $\lambda(t) \ge \lambda_0 t$ for $t$ sufficiently large.
Now let $J_{max} = \{ k^*, k^*+1, \dots , m\}$ denote the index set for the dominant terms in $\Delta^{\pm}$
and consider any quotient of the form
\begin{equation}\label{eq:quotient}
\frac{ \prod_{j \in J} \xi_j^2(n,t) }{ \prod_{j \in J_{max} } \xi_j^2(n,t) }
\end{equation}
for which the coefficient in the numerator of $\Delta^{\pm}$ is non-zero.  Note that $J \ne \{ k^*+1, k^*+2, \dots , m\}$
or $ \{ k^*-1, k^*, \dots , m\}$ since such terms also have zero coefficients due the definition of $\zeta^{\pm}$.
Hence, for any nonzero term, there is either an ``extra'' factor of the form $\xi_j^2(n,t)$, with $j < k^*-1$ in
the numerator of \eqref{eq:quotient}, or a ``missing'' factor of $\xi_j^2(n,t)$, with $j > k^*+1$.  If we
define $\Gamma = \max | \gamma_j |$, then from the fact that $\kappa_1 \le \kappa_j \le \kappa_m$, 
and the definition of $\xi_j(n,t)$, we see that the quotient in \eqref{eq:quotient} can be bounded by
\begin{equation}\label{eq:quotient_bound}
\left| \frac{ \prod_{j \in J} \xi_j^2(n,t) }{ \prod_{j \in J_{max} } \xi_j^2(n,t) } \right|
\le e^{2\kappa_m} e^{2\Gamma} e^{-\kappa_1 |n-n_j(t)  |} e^{-\kappa_1 \lambda(t)}
\end{equation}
for some $j = 1, \dots , m$.  Since there are only finitely many term in the numerator and denominator
of $\Delta^{\pm}$, we see that we have the bound
\begin{equation}
| \Delta^{\pm}(n,t) | \le K e^{2\kappa_m} e^{2\Gamma} e^{-\kappa_1 \lambda(t)}
 \sum_{j=1}^m   e^{-\kappa_1 |n-n_j(t)  |}  \ .
\end{equation}
Summing over $n$ then completes the proof of Lemma \ref{lem:resolution}. 
\end{proof}

We now turn to Proof of Lemma \ref{lem:profile_identity} which is an identity satisfied by the profile of the
$m$-soliton solution.
\begin{proof}
Note that it suffices to show that the scalar equality $\partial_t r = \sum_i 2 \gamma_i \sinh \kappa_i \partial_{\gamma_i} r$ for all time.  This is because $p(t)$ can be recovered from $r(t)$ via  $p = (S-I)^{-1}\dot{r}$ and the transformation $r \mapsto p$ commutes with time derivatives, $\gamma_i$ derivatives, and linear combinations thereof.  Thus if the identity holds for $r$, it holds also for $p$ and hence for $U = (r,p)$.

We write $r = \Delta \log d$ where $d = \det(I + C)$ with $C_{ij} = \frac{1}{1 - e^{-(\kappa_i + \kappa_j)}} e^{-(\eta_i +\gamma_i + \eta_j + \gamma_j)}$ and $\eta_i = \kappa_i n - \sinh \kappa_i t$.
Thus $\partial_\diamond r = \Delta \frac{\partial_\diamond d}{d}$ where $\partial_\diamond$ stands for $\partial_t$ or $\partial_{\gamma_i}$ for some $i$.  Here we have used the fact that the discrete Laplacian $\Delta$ commutes with derivatives with respect to $t$ and $\gamma_i$.
Write $d = \sum_{\sigma \in S_n} \mathrm{sgn}(\sigma) \Pi_{j = 1}^n (\delta_{j\sigma(j)} + C_{j\sigma(j)})$ so that
\[ \partial_{\diamond} d = \sum_{\sigma \in S_n} \mathrm{sgn}(\sigma) \sum_{l = 1}^n \partial_{\diamond} C_{l,\sigma(l)} \Pi_{j \ne l} (\delta_{j\sigma(j)} + C_{j\sigma(j)}) \]

Note that $\partial_{\gamma_i} C_{jk} = -\frac{\delta_{ij} + \delta_{ik}}{2} C_{jk}$ and that $\partial_t C_{jk} = (\sinh(\kappa_j) + \sinh(\kappa_k))C_{jk}$.
Thus
\[ \ba{lll} 
-\sum_{i = 1}^n 2 \sinh(\kappa_i) \partial_{\gamma_i} d & = & 
\displaystyle{
\sum_{i=1}^n \sum_{\sigma \in S_n} \mathrm{sgn}(\sigma) \sum_{l = 1}^n (\delta_{i,l} + \delta_{i,\sigma(l)})\sinh(\kappa_i) C_{l,\sigma(l)} \Pi_{j \ne l} (\delta_{j\sigma(j)} + C_{j\sigma(j)})
} \\ \\
& = & \displaystyle{
\sum_{l=1}^n \sum_{\sigma \in S_n} \mathrm{sgn}(\sigma) (\sinh(\kappa_l) + \sinh(\kappa_{\sigma(l)}))C_{l,\sigma(l)} \Pi_{j \ne l} (\delta_{j\sigma(j)} + C_{j\sigma(j)})
} \\ \\
& = & \displaystyle{
\sum_{\sigma \in S_n} \mathrm{sgn}(\sigma) \sum_{l=1}^n \partial_t C_{l,\sigma(l)} \Pi_{j \ne l} (\delta_{j\sigma(j)} + C_{j\sigma(j)})
} \\ \\
& = & \partial_t d
\ea
\]

\end{proof}

\bibliography{BHW_reference}

\end{document}